\definecolor{brightmaroon}{rgb}{0.76, 0.13, 0.28}
\definecolor{maroon}{rgb}{0.5, 0, 0}
\definecolor{webgreen}{rgb}{0, 0.5, 0}
\newcommand{\caiColor}{brightmaroon}
\newcommand{\fiColor}{blue}
\newcommand{\E}[1]{\mathbb{E} \left[#1\right]}
\newcommand{\Va}[1]{{\mathrm{Var}}\left(#1\right)}
\newcommand{\va}{{\mathrm{Var}}}
\newcommand{\mZeroIII}{\cM_{0,2}^{3}}
\newcommand{\mZeroI}{\cM_{0,2}^{1}}
\newcommand{\mZeroD}{\cM_{0,2}^{d}}
\newcommand{\norm}[1]{\left\lVert#1\right\rVert}
\newcommand{\ball}[1]{\hat{#1}}
\def\bI{\ball{I}}
\def\bX{\ball{X}}
\def\bY{\ball{Y}}
\def\bW{\ball{W}}
\newcommand{\bVec}{(\bX, \bY, \bW)}
\newcommand{\bVecI}{\left(\bX_n^{(i)}, \bY_{n}^{(i)}, \bW_{n}^{(i)}\right)}
\newcommand{\bVecIx}{\left(\bX^{(i)}, \bY^{(i)}, \bW^{(i)}\right)}
\newcommand{\bVecN}{(\bX_n, \bY_n, \bW_n)}
\newcommand{\nVec}{(X, Y, W)}
\newcommand{\nVecI}{\left(X_n^{(i)}, Y_{n}^{(i)}, W_{n}^{(i)}\right)}
\newcommand{\nVecN}{(X_n, Y_n, W_n)}
\def\bU{\ball{\Upsilon}}
\def\bZ{\ball{Z}}
\newcommand{\ind}{{\bf 1}}
\newcommand\cM{{\cal M}}
\newcommand\cN{{\cal N}}
\newcommand\cU{{\cal U}}
\newcommand\cV{{\cal V}}
\def\R{\mathbb{R}}
\def\N{\mathbb{N}}
\def\Z{\mathbb{Z}}
\def\QQ{Q}
\def\XX{X}
\def\YY{Y}
\def\ZZ{W}
\def\U{\Upsilon}
\def\F{\Phi}
\def\a{\alpha}
\def\b{\beta}
\def\G{\Gamma}
\def\z{\zeta}
\def\th{\theta}
\def\l{\lambda}
\def\m{\mu}
\def\r{\rho}
\def\s{\sigma}
\newcommand\Prob[1]{{\mathbb{P}\left\{#1\right\}}}
\newcommand{\dep}{d}
\newtheorem{theorem}{Theorem}
\numberwithin{theorem}{section}
\newtheorem{lemma}[theorem]{Lemma}
\newtheorem{corollary}[theorem]{Corollary}
\newcommand{\bfrac}[2]{\left({\frac{#1}{#2}}\right)}
\newtheorem{remark}[theorem]{Remark}
\newtheorem*{remark*}{Remark}
\theoremstyle{definition}
\newtheorem{example}[theorem]{Example}
\newcommand{\floor}[1]{\lfloor #1 \rfloor}
\newcommand{\ceil}[1]{\lceil #1 \rceil}
\newcommand{\dM}{d_2}
\newcommand{\dMto}{\overset{\dM}{\too}}
\newcommand{\inlaw}{\dto}
\newcommand{\inas}{\asto} 
\newcommand{\inLII}{\overset{L^{2}}{\too}}
\newcommand{\eqd}{\,{\buildrel {\mathrm{def}} \over =}\,}
\newcommand{\eql}{\eqdd}
\newcommand{\Unif}{\mathop{\mathrm{Unif}}}
\newcommand{\nBar}{\overline{n}}
\newcommand{\Toll}{D}
\newcommand{\bToll}{\ball{\Toll}}
\newcommand{\bTollN}{\bToll_{n}}
\newcommand{\TollN}{\Toll_{n}}
\numberwithin{equation}{section}
\newcommand{\refT}[1]{Theorem~\ref{#1}}
\newcommand{\refL}[1]{Lemma~\ref{#1}}
\newcommand{\refR}[1]{Remark~\ref{#1}}
\newcommand{\refS}[1]{Section~\ref{#1}}
\newcommand\kk{\varkappa}
\newcommand\set[1]{\ensuremath{\{#1\}}}
\newcommand\xpar[1]{(#1)}
\newcommand\bigpar[1]{\bigl(#1\bigr)}
\newcommand\Bigpar[1]{\Bigl(#1\Bigr)}
\newcommand\lrpar[1]{\left(#1\right)}
\newcommand{\too}{\longrightarrow}
\newcommand\dto{\overset{\mathrm{d}}{\too}}
\newcommand\asto{\overset{\mathrm{a.s.}}{\too}}
\newcommand\eqdd{\overset{\mathrm{d}}{=}}
\title{Inversions in split trees and conditional Galton--Watson trees}
\author{Xing Shi Cai, Cecilia Holmgren, Svante Janson, Tony Johansson, Fiona Skerman\thanks{This work was
        partially supported by two grants from the Knut and Alice Wallenberg Foundation, a grant
        from the Swedish Research Council, and the Swedish Foundations' starting grant from the
        Ragnar S\"{o}derberg Foundation. Fiona Skerman: work partially conducted while affiliated with the Heilbronn Institute for mathematical research at the University of Bristol. }\\
    Department of Mathematics, Uppsala University, Sweden\\
    \texttt{\small\{xingshi.cai, cecilia.holmgren, svante.janson, tony.johansson, fiona.skerman\}{@}math.uu.se}
}
\begin{document}

\maketitle

\begin{abstract}
    We study \(I(T)\), the number of inversions in a tree \(T\) with its vertices labeled uniformly
    at random, which is a generalization of inversions in permutations.
    We first show that the cumulants of \(I(T)\) have explicit formulas involving 
    the \(k\)-total common ancestors of \(T\) (an extension of the total path length).
    Then we consider \(X_n\), the normalized version of \(I(T_n)\), for a sequence of trees \(T_n\).
    For fixed \(T_{n}\)'s, we prove a sufficient condition for \(X_n\) to converge in distribution.
    As an application, we identify the limit of \(X_n\) for complete \(b\)-ary trees.
    For \(T_n\) being split trees \cite{MR1634354}, we show that \(X_n\)
    converges to the unique solution of a distributional equation.
    Finally, when \(T_n\)'s are conditional Galton--Watson trees, we show that \(X_n\) converges to
    a random variable defined in terms of Brownian excursions. 
    By exploiting the connection between inversions and the total path length, we are able to
    give results that significantly strengthen and broaden previous work by
    \citet{ps12}.

    \medskip\noindent\textbf{MSC classes:} 60C05
\end{abstract}

\section{Introduction}

\subsection{Inversions in a fixed tree}

Let \(\sigma_1, \dots, \sigma_n\) be a permutation of \(\{1,\dots,n\}\).  If \(i < j\) and
\(\sigma_i > \sigma_j\), then the pair \( (\sigma_i, \sigma_j)\) is called an inversion.  The
concept of inversions was introduced by \citet{cramer1750} (\citeyear{cramer1750}) due to its
connection with solving linear equations.  More recently, the study of inversions has been motivated
by its applications in the analysis of sorting algorithms, see, e.g.,
\cite[Section\ 5.1]{k98}. Many authors, including \citet[pp.\ 256]{f68}, \citet[pp.\ 29]{s97},
\citet{b73}, have shown that the number of inversions in uniform random permutations has a central
limit theorem.  More recently, \citet{m01} and \citet{lp03} studied permutations containing a fixed
number of inversions.

The concept of inversions can be generalized as follows.
Consider an unlabeled rooted tree $T$ on node set $V$. Let $\r$ denote the root. Write $u < v$
if $u$ is a \emph{proper ancestor} of $v$, i.e., the unique path from $\r$ to $v$ passes through $u$
and \(u \ne v\). Write
$u \le v$ if \(u\) is an ancestor of \(v\), i.e., either $u < v$ or $u = v$. Given a bijection $\l : V \to \{1,\dots,|V|\}$ (a {\em
    node labeling}), define the number of {\em inversions}
$$
I(T, \l) \eqd \sum_{u < v} {\ind}_{\l(u) > \l(v)}.
$$
Note that if \(T\) is a path, then \(I(T, \l)\) is nothing but the number of inversions in a
permutation. Our main object of study is the random variable $I(T)$, defined
by $I(T) = I(T, \l)$ where $\l$ is chosen uniformly at 
random from the set of bijections from $V$ to $\{1,\dots,|V|\}$.

The enumeration of trees with a fixed number of inversions has been studied by \citet{mr68} and
\citet{gsy95} using the so called \emph{inversions polynomial}.  While analyzing linear probing
hashing, \citet{fpv98} noticed that the numbers of inversions in Cayley
trees with uniform
random labeling converges to an Airy distribution.  \citet{ps12} showed that this is true for
conditional Galton--Watson trees, which encompasses the case of Cayley trees.

For a node \(v\), let $z_v$ denote the size of the subtree rooted at $v$. 
The following representation of \(I(T)\), proved in \refS{sec:prelude}, is the basis of most of our results.
\begin{lemma}\label{lem:independence}
    Let \(T\) be a fixed tree. Then
    \begin{equation}\label{it}
        I(T) \eqdd \sum_{v\in V} Z_v,
    \end{equation}
    where $\{Z_v\}_{v\in V}$ are independent random variables, and 
    $Z_v \sim \Unif\{0,1,\dots,z_v-1\}$.
\end{lemma}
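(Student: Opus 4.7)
The plan is to introduce, for each $v \in V$, the \emph{local rank}
\[
R_v \eqd \bigl|\{w \in T_v : \lambda(w) \le \lambda(v)\}\bigr|,
\]
where $T_v$ denotes the subtree rooted at $v$, so that $R_v \in \{1, \dots, z_v\}$. I would then prove the lemma in two steps: first, a deterministic identity $I(T, \lambda) = \sum_{v \in V}(R_v - 1)$; second, the probabilistic fact that $\{R_v\}_{v \in V}$ is an independent family with $R_v \sim \Unif\{1, \dots, z_v\}$. Setting $Z_v \eqd R_v - 1$ then gives the claim.

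The first step is a routine reindexing: writing $I(T, \lambda) = \sum_u \#\{v \in T_u \setminus \{u\} : \lambda(v) < \lambda(u)\}$ and noting that the inner count equals $R_u - 1$, since $\lambda(u)$ has rank $R_u$ among the $z_u$ labels appearing in $T_u$.

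The second step is the heart of the argument, and I would prove it by induction on $n = |V|$. Let $c_1, \dots, c_k$ be the children of the root $\rho$ with respective subtrees $T_1, \dots, T_k$ of sizes $z_1, \dots, z_k$. Condition on $\lambda(\rho) = m$ and on the partition $L_1 \sqcup \cdots \sqcup L_k = \{1, \dots, n\} \setminus \{m\}$ with $|L_i| = z_i$. Under this conditioning the restrictions $\lambda|_{T_i}$ are mutually independent uniformly random bijections $T_i \to L_i$. Since $R_v$ depends only on the relative order of the labels inside $T_v$, the inductive hypothesis applied to each $T_i$ (after order-preservingly identifying $L_i$ with $\{1, \dots, z_i\}$) shows that $(R_v)_{v \in T_i}$ is an independent family of uniforms; and the different $i$'s are independent because they involve disjoint parts of $\lambda$. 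The key observation is that this conditional joint law of $(R_v)_{v \ne \rho}$ does not depend on $(m, L_1, \dots, L_k)$, so it coincides with the unconditional law and is independent of $R_\rho = m$, which itself is $\Unif\{1, \dots, n\}$.

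The main subtle point is the conditioning argument in the inductive step: one must recognize that local ranks are invariant under order-preserving relabelings, so that the inductive hypothesis applies even though the labels in each $T_i$ come from an arbitrary $z_i$-element subset of $\{1, \dots, n\}$ rather than from $\{1, \dots, z_i\}$. Aside from this book-keeping, everything is routine.
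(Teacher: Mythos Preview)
Your proof is correct and follows essentially the same route as the paper's: your $R_v-1$ is exactly the paper's $Z_v=\sum_{w>v}\ind_{\lambda(v)>\lambda(w)}$, and both arguments establish independence by the same induction---conditioning on the partition of labels among the root's subtrees, applying the inductive hypothesis to each subtree (using that ranks depend only on relative order), and noting that the resulting conditional law of $(Z_v)_{v\ne\rho}$ is free of the conditioning data and hence independent of $Z_\rho$. The only cosmetic difference is that the paper conditions on the label sets $\lambda(T_1),\dots,\lambda(T_d)$ (which implicitly fixes $\lambda(\rho)$), while you condition on $(m,L_1,\dots,L_k)$ explicitly.
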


We will generally be concerned with the centralized number of inversions, i.e., $I(T) - \E{I(T)}$.
For any $u < v$ we have $\Prob{\l(u) > \l(v)} = 1/2$.
Let \(h(v)\) denote the \emph{depth} of \(v\), i.e., the distance from \(v\) to the root \(\rho\).
(The \emph{distance} from \(u\) to \(v\) is the number of edges in the unique path connecting
\(u\) and \(v\).)
It immediately follows that,
\begin{equation}\label{eq:halftpl}
\E{I(T)} = \sum_{u < v} \E{{\ind}_{\l(u) > \l(v)}} = \frac12 \U(T)
,
\end{equation}
where $\U(T) \eqd \sum_{v} h(v)$ is called the {\em total path length} (or {\em internal path
    length}) of $T$.

Let $\kk_k=\kk_k(X)$ denote the $k$-th cumulant of a random variable $X$ (provided it exists); thus
$\kk_1(X)=\E X$ and $\kk_2(X)=\Va X$ (see \cite[Theorem\ 4.6.4]{g13}).  We now define \(\U_{k}(T)\),
\emph{the $k$-total common ancestors} of \(T\), which allows us to generalize \eqref{eq:halftpl} to higher
cumulants of \(I(T)\).
For \(k\) nodes $v_1, \ldots, v_k$ (not necessarily distinct), let $c(v_1, \ldots,v_k)$ be the
number of ancestors that they share, i.e.,
\[
    c(v_1,\dots,v_k) \eqd \left|
        \left\{
            u \in V: u \le v_{1}, u \le v_{2},\dots, u \le v_{k}
        \right\}
    \right|
    .
\]
We define
\begin{equation}\label{Uk}
   \Upsilon_{k}(T) \eqd \sum_{v_1,\dots,v_k} c(v_{1},\dots,v_{k}),
\end{equation}
where the sum is over all ordered $k$-tuples of nodes in the tree.  For a single node $v$, 
$h(v)=c(v)-1$, since \(v\) itself is counted in \(c(v)\). So $\U(T)=\U_1(T)-|V|$; i.e., we recover the
usual notion of the total path length.  
\begin{theorem}
    \label{thm:cumulant}
    Let $T$ be a fixed tree. 
    Let \(\kk_{k}(I(T))\) be the \(k\)-th cumulant of \(I(T)\).
    Then 
    \begin{align}
        \E{I(T)}& = \kk_{1}(I(T)) =\frac12\U(T)
        = \frac12(\U_{1}(T)-|V|), 
        \label{EIT}
        \\
        \Va{I(T)}& = \kk_{2}(I(T)) = \frac{1}{12}
        (\U_{2}(T)-|V|)
        \label{VIT},
    \end{align}
    and, more generally, for \(k \ge 1\),
    \begin{align}
        \kk_{2k+1}(I(T)) = 0,
        \qquad
        \kk_{2k}(I(T))=\frac{B_{2k}}{2k}
        (\U_{2k}(T)-|V|),
        \label{kkIT}
    \end{align}
    where \(B_k\) denotes the \(k\)-th Bernoulli number.
    Moreover, \(I(T)\) has the moment generating function
    \begin{equation}\label{mgfIT}
        \E{e^{tI(T)}}=\prod_{v\in V}\frac{e^{z_vt}-1}{z_v(e^t-1)},
    \end{equation}
and for the centralized variable  we have the estimate
    \begin{equation}\label{mgfIT*}
        \E{e^{t(I(T)-\E {I(T)})}}
        \le \exp\Bigpar{\tfrac{1}{8}t^2\sum_{v\in T}(z_v-1)^2}
        \le \exp\Bigpar{\tfrac{1}{8}t^2\sum_{v\in T}z_v^2}
        =\exp\Bigpar{\tfrac{1}{8}t^2\Upsilon_2(T)},
        \qquad t\in\R.
    \end{equation}
\end{theorem}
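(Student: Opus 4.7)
The plan is to use \refL{lem:independence} to reduce everything to an explicit calculation for a sum of independent discrete uniforms, deriving the moment generating function in closed form and then reading off all cumulants from the Taylor expansion of its logarithm. Writing $I(T) \eqdd \sum_v Z_v$ with $Z_v \sim \Unif\{0,1,\dots,z_v-1\}$, a direct summation of a geometric series gives $\E{e^{tZ_v}} = (e^{z_v t}-1)/(z_v(e^t-1))$, and independence of the $Z_v$ immediately yields \eqref{mgfIT}.

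For the cumulants I would expand the logarithm of the MGF. Observe that
\[
\log \E{e^{t I(T)}} = \sum_{v \in V} \bigl(g(z_v t) - g(t)\bigr),
\qquad
g(x) \eqd \log\frac{e^x - 1}{x},
\]
so the key analytic step is the Taylor expansion of $g$ at $0$. Differentiating and using the generating function identity $x/(e^x-1) = \sum_k B_k x^k / k!$ gives
\[
g'(x) = \frac{e^x}{e^x-1} - \frac{1}{x} = \frac{1}{2} + \sum_{k \ge 1} \frac{B_{2k}}{(2k)!}\, x^{2k-1},
\]
because odd Bernoulli numbers vanish except for $B_1$. Integrating with $g(0)=0$ and substituting back yields
\[
\log \E{e^{t I(T)}} = \frac{t}{2}\sum_v (z_v - 1) + \sum_{k \ge 1} \frac{B_{2k}}{2k\,(2k)!}\Bigl(\sum_v (z_v^{2k} - 1)\Bigr) t^{2k},
\]
so all odd cumulants of order $\ge 3$ vanish automatically, and the remaining ones can be read off as coefficients of $t^m/m!$.

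To translate these sums over $v$ into the quantities $\U_k(T)$ in the statement, I would establish the identity $\U_k(T) = \sum_v z_v^k$ by swapping the order of summation in \eqref{Uk}: for each fixed potential ancestor $u$, the ordered $k$-tuples $(v_1,\dots,v_k)$ whose entries all lie in the subtree rooted at $u$ number exactly $z_u^k$. Specialising to $k=1$ recovers $\U_1(T) - |V| = \sum_v(z_v-1) = \U(T)$, giving \eqref{EIT}, while $k=2$ together with $B_2 = 1/6$ gives \eqref{VIT} and the general even-order formula gives \eqref{kkIT}.

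For the subgaussian estimate \eqref{mgfIT*}, I would apply Hoeffding's lemma to each centred $Z_v$: since $Z_v$ is supported in an interval of length $z_v - 1$, one has $\E{e^{t(Z_v - \E{Z_v})}} \le \exp(t^2(z_v-1)^2/8)$; multiplying over $v$, bounding $(z_v - 1)^2 \le z_v^2$, and using $\sum_v z_v^2 = \U_2(T)$ yields the three inequalities. The only genuinely technical step is pinning down the Bernoulli expansion of $g$ cleanly; everything else is bookkeeping, together with the combinatorial identity $\U_k(T) = \sum_v z_v^k$, which is essentially the observation that counting ordered $k$-tuples by their common ancestors is the same as counting them by subtree membership.
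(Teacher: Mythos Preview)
Your proposal is correct and follows essentially the same approach as the paper: both use \refL{lem:independence} to reduce to independent uniforms, compute the MGF via a geometric sum, expand $\log((e^x-1)/x)$ using the Bernoulli generating function to read off cumulants, establish $\U_k(T)=\sum_v z_v^k$ by swapping the order of summation, and obtain the subgaussian bound via Hoeffding's lemma (the paper uses the equivalent $\cosh x\le e^{x^2/2}$ for the symmetric centred $Z_v$, and in fact cites Hoeffding for the general version). The only organisational difference is that the paper packages the single-uniform cumulant calculation and the combinatorial identity as separate lemmas.
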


\begin{remark}\label{R1}
    Recalling that \(B_{1}=-1/2\) and \(B_{2k+1}=0\) for \(k \ge 1\), 
\eqref{EIT}--\eqref{kkIT} 
can also be written
    as
    \[
        \kk_{k}(I(T)) = \frac{B_k}{k}(-1)^{k}(\Upsilon_{k}(T)-|V|),
        \qquad
        k \ge 1
        .
    \]
\end{remark}

\begin{remark}
Higher moments and central moments 
can be calculated from the cumulants by standard formulas \cite{s95}.
(Note that all odd central moments vanish by symmetry.)
For example,  recalling $B_4=-1/30$, \refT{thm:cumulant} implies that
\begin{equation}
\E {\xpar{I(T)-\E{I(T)}}^4} 
= 
3\kk_2(I(T))^2 
+
\kk_4(I(T))
= 
\frac{1}{48} \Bigpar{
    \U_{2}(T)-|V|
}^2
-\frac{1}{120}
(
\U_{4}(T)-|V|
).
\end{equation}
\end{remark}

\begin{remark}
    An inversion is a special case of a pattern in a permutation. Thus, just as we can study
    inversions in trees, we can also study other patterns in trees. A recent paper by
    \citet{finoa2018} gives generalizes \ref{thm:cumulant} from inversions to any fixed patterns.
\end{remark}

\subsection{Inversions in sequences of trees}

The total path length $\U(T)$ has been studied for
random trees like split trees \cite{MR3025680} and conditional Galton--Watson trees \cite[Corollary
9]{a91}.  This leads us to focus on the deviation
$$
X_n 
=
\frac{I(T_n) - \E{I(T_n)}}{s(n)}
,
$$
under some appropriate scaling $s(n)$, for a sequence of (random or fixed)
trees $T_n$, where $T_n$ has size \(n\).

\subsubsection*{Fixed trees}

\begin{theorem}
    \label{thm:fixed}
    Let \(T_n\) be a sequence of fixed trees on $n$ nodes.
    Let
$$
    X_n =
    \frac{I(T_n)-\E {I(T_n)}}{\sqrt{\Upsilon_2(T_n)}}.
    $$
    Assume that for all \(k \ge 1\),
    \begin{equation}
        \frac{\Upsilon_{2k}(T_n)}{\Upsilon_2(T_n)^{k}}
        \to
        \zeta_{2k},
        \label{condition}
    \end{equation}
for some sequence \((\zeta_{2k})\).
    Then there exists a unique distribution $X$ with 
    \begin{equation}
        \kk_{2k-1}(X) = 0,
        \qquad
        \kk_{2k}(X) = \frac{B_{2k}}{2k} \zeta_{2k}, 
        \qquad k \ge 1,
    \end{equation}
    such that 
$X_n\dto X$ and, moreover,
\(\E{e^{tX_n}} \to \E{e^{tX}}<\infty\) for every $t \in \R$.
\end{theorem}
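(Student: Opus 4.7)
The plan is to combine the explicit cumulant formulas \eqref{EIT}--\eqref{kkIT} with the uniform sub-Gaussian moment generating function bound \eqref{mgfIT*} from \refT{thm:cumulant}. First, by affine invariance of cumulants of order $\ge 2$, one has $\kk_{2k-1}(X_n) = 0$ and
\[
    \kk_{2k}(X_n) = \frac{\kk_{2k}(I(T_n))}{\Upsilon_2(T_n)^{k}} = \frac{B_{2k}}{2k} \cdot \frac{\Upsilon_{2k}(T_n) - n}{\Upsilon_2(T_n)^{k}}
    \qquad (k \ge 1).
\]
Since $\Upsilon_2(T_n) \ge n$ (because $z_v \ge 1$ for every $v$) and the theorem is nontrivial only when $\Upsilon_2(T_n)/n \to \infty$, the term $n/\Upsilon_2(T_n)^{k}$ vanishes in the limit for every $k \ge 1$, and the hypothesis \eqref{condition} yields $\kk_{2k}(X_n) \to (B_{2k}/(2k)) \zeta_{2k}$ as $n \to \infty$.

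Next, I would apply \eqref{mgfIT*} with $t$ replaced by $t/\sqrt{\Upsilon_2(T_n)}$ to obtain the uniform estimate $\E{e^{tX_n}} \le \exp(t^2/8)$ for every $t \in \R$ and every $n$. This sub-Gaussian bound simultaneously yields tightness of $\{X_n\}$ and uniform integrability of $\{e^{tX_n}\}_n$ for each fixed $t$. Consequently, every subsequence of $\{X_n\}$ admits a weakly convergent sub-subsequence with limit $X$ along which $\E{e^{tX_n}} \to \E{e^{tX}}$ and all moments converge; by standard formulas this forces the cumulants of $X$ to equal the target values computed above. Since a distribution with a sub-Gaussian MGF is uniquely determined by its moments, all subsequential limits coincide, producing a single distribution $X$ with the prescribed cumulants, the convergence $X_n \dto X$, and MGF convergence for every $t \in \R$.

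The main delicate point is upgrading the pointwise convergence of cumulants to convergence in distribution together with MGF convergence; this is exactly where the sub-Gaussian estimate \eqref{mgfIT*} is essential, simultaneously supplying tightness and the moment determinacy of the limit. All remaining steps reduce to direct manipulation of the cumulant identities provided by \refT{thm:cumulant}, so once the tightness/determinacy dichotomy is resolved via \eqref{mgfIT*}, the conclusion follows with no further combinatorial input.
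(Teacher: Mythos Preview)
Your proposal is correct and follows essentially the same route as the paper: cumulant convergence via \refT{thm:cumulant} together with the uniform sub-Gaussian MGF bound \eqref{mgfIT*}, then the method of moments (which you package as a subsequence argument, while the paper simply says moment convergence plus the MGF bound imply the conclusions). One small sharpening: since $\Upsilon_2(T_n)=\sum_v z_v^2\ge z_\rho^2=n^2$ (equivalently $c(v_1,v_2)\ge1$ for all pairs), the term $n/\Upsilon_2(T_n)^k\le n^{1-2k}\to 0$ automatically for every $k\ge1$, so your ``nontriviality'' caveat is unnecessary.
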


\begin{remark}
By \refT{thm:cumulant},
\( \Va{X_n} =  (\Upsilon_{2}(T_n)-n)/({12 s(n)^2}).\)
Thus, it is natural to consider
\(s(n) 
    = \Theta\bigpar{\sqrt{\Upsilon_{2}(T_n)-n}}
    = \Theta\bigpar{\sqrt{\Upsilon_{2}(T_n)}}
\), where we use \(\Upsilon_{2}(T_n) \eqd \sum_{v_1, v_2} c(v_1, v_2) \ge n^2\).
\end{remark}

\begin{remark}
    The functions \(\psi_{X_n}(t) \eqd \E{e^{tX_n}}\) and
    \(\psi_{X}(t) \eqd \E{e^{tX}}\) are called moment generating functions of \(X_n\) and \(X\) respectively.
The convergence \(\psi_{X_n}(t) \to \psi_{X}(t)<\infty\) in a neighborhood
    of \(0\) implies that \(X_{n} \inlaw X\) and  
    \( (|X_n|^{r})_{n \ge 1}\) is uniformly integrable for all \(r > 0\); 
thus \(\E{|X_{n}^r|} \to  \E{|X|^r}\) for all \(r > 0\)
and \(\E{X_{n}^r} \to  \E{X^r}\) for all integers \(r \ge1\).
See, e.g., \cite[Theorem 5.9.5]{g13}.
\end{remark}

As simple examples, we  consider two extreme cases.

\begin{example}
    When \(P_n\) is a path of \(n\) nodes, we have for fixed $k\geq 1$
    \[
        \Upsilon_{k}(P_n) \sim \frac{1}{k+1}n^{k+1}.
    \]
    Thus \(\Upsilon_{2k}(P_n)/\Upsilon_{2}(P_{n})^{k} \to \kk_{2k} =
    0\) for \(k \ge 2\).
    So by \refT{thm:fixed}, 
\(X_n\) converges to a normal distribution, and we recover the central 
    limit law for inversions in permutations.
    Also, the vertices have subtree sizes $1,\ldots, n$ and so we also recover 
from Theorem \ref{thm:cumulant}
    the moment generating
    function \(\prod_{j=1}^n ( e^{jt}-1)/(j(e^t-1))\) \cite{m01,s97}.
\end{example}

\begin{example}
  Let
$T_n=S_{n-1}$, a star with $n-1$ leaves, and denote the root by $o$.
We have $z_o=n$ and $z_v=1$ for $v\neq o$. Hence, by
\refL{lem:independence}, 
or directly,
$I(S_{n-1})\sim \Unif\set{0,\dots,n-1}$, and consequently
\begin{equation}
  \bigpar{I(T_n)-\E{I(T_n)}}/n \dto \Unif[-\tfrac12,\tfrac12].
\end{equation}
This follows also by \refT{thm:fixed}, since $\Upsilon_k(S_{n-1})\sim n^k$
for $k\ge2$ (e.g., by \refL{lem:common} below).
\end{example}

It is straightforward to compute the \(k\)-total common ancestors for \(b\)-ary trees. Thus our next result follows
immediately from \refT{thm:fixed}.
\begin{theorem}\label{thm:complete}
    Let $b\geq 2$ and let \(T_n\) be the complete $b$-ary tree 
    of height \(m\) with $n = (b^{m + 1} - 1)/(b-1)$ nodes.
    Let
    \[
        X_n 
        = \frac{I(T_n) - \E{I(T_n)}}{n}
        ,
        \qquad
        \text{and}
        \qquad
        X = \sum_{\dep \ge 0} \sum_{j=1}^{b^\dep} \frac{U_{\dep,j}}{b^\dep},
    \]
    where \( (U_{\dep, j})_{\dep \ge 0, j \ge 1}\) are independent \(\Unif[-1/2,1/2]\).
    Then $X_n\dto X$ and
    \(
        \E{e^{tX_n}} \to
        \E{e^{t X}} <\infty
    \),
    for every \(t \in \R\).
    Moreover \(X\) is the unique random variable with
    \begin{equation}\label{kkcomplete}
        \kk_{2k-1}(X) = 0, \qquad 
\kk_{2k}(X) = \frac{B_{2k}}{2k}\frac{b^{2k-1}}{b^{2k-1}-1}, \qquad k \ge 1.
    \end{equation}
\end{theorem}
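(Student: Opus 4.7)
I would deduce \refT{thm:complete} from \refT{thm:fixed} plus a cumulant computation, and then identify the limit $X$ explicitly via its cumulants.

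First I would compute $\Upsilon_k(T_n)$ for the complete $b$-ary tree. Interchanging the order of summation in \eqref{Uk} gives the useful identity $\Upsilon_k(T) = \sum_{u \in V} z_u^k$, since each potential ancestor $u$ is counted once for every ordered $k$-tuple of weak descendants, of which there are $z_u^k$. For the complete $b$-ary tree of height $m$ there are $b^d$ nodes at depth $d$, each with subtree size $z_d = (b^{m-d+1}-1)/(b-1)$, so $\Upsilon_k(T_n) = \sum_{d=0}^m b^d z_d^k$. Since $n = (b^{m+1}-1)/(b-1)$ we have $z_d/n \to b^{-d}$, and for each $k \ge 2$ a geometric sum yields
\[
    \frac{\Upsilon_k(T_n)}{n^k} \to \sum_{d \ge 0} b^{d(1-k)} = \frac{b^{k-1}}{b^{k-1}-1}.
\]

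Next I would invoke \refT{thm:fixed} with scaling $\sqrt{\Upsilon_2(T_n)}$: condition \eqref{condition} holds with $\zeta_{2k} := (b^{2k-1}/(b^{2k-1}-1))\,((b-1)/b)^k$, producing a limit $Y$ of $Y_n := (I(T_n) - \ee[I(T_n)])/\sqrt{\Upsilon_2(T_n)}$ with $\kappa_{2k}(Y) = (B_{2k}/(2k))\zeta_{2k}$ and MGF convergence. Since $\sqrt{\Upsilon_2(T_n)}/n \to \sqrt{b/(b-1)}$, the rescaled $X_n = Y_n\,\sqrt{\Upsilon_2(T_n)}/n$ converges in distribution to $X' := \sqrt{b/(b-1)}\,Y$, whose even cumulants work out to $(B_{2k}/(2k))\,b^{2k-1}/(b^{2k-1}-1)$, matching \eqref{kkcomplete}. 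For MGF convergence of $X_n$ itself I would cite \eqref{mgfIT*} directly: $\ee[e^{tX_n}] \le \exp(t^2 \Upsilon_2(T_n)/(8n^2)) \to \exp(t^2 b/(8(b-1)))$, so $\{e^{tX_n}\}$ is $L^2$-bounded and hence uniformly integrable, giving $\ee[e^{tX_n}] \to \ee[e^{tX'}] < \infty$.

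Finally I would check that $X' \eqdd X$ by matching cumulants. The series defining $X$ converges in $L^2$ because depth $d$ contributes variance $b^d \cdot b^{-2d}/12 = 1/(12 b^d)$, which is summable. The cumulants of $U \sim \Unif[-1/2,1/2]$ can be read off from $\log(\sinh(t/2)/(t/2)) = \sum_{k \ge 1} (B_{2k}/(2k))\,t^{2k}/(2k)!$, giving $\kappa_{2k}(U) = B_{2k}/(2k)$ with vanishing odd cumulants; by independence and homogeneity,
\[
    \kappa_{2k}(X) = \sum_{d \ge 0} b^d \cdot b^{-2kd}\, \kappa_{2k}(U) = \frac{B_{2k}}{2k}\cdot\frac{b^{2k-1}}{b^{2k-1}-1} = \kappa_{2k}(X').
\]
Since the MGF of $X$ is finite on all of $\bbR$ (apply Hoeffding layer by layer to get $\ee[e^{tX}] \le \exp(t^2 b /(8(b-1)))$), its cumulants determine its distribution, so $X \eqdd X'$. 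The only real obstacle is the bookkeeping between the two natural scalings $n$ and $\sqrt{\Upsilon_2(T_n)}$, together with recalling the Bernoulli expansion for $\kappa_{2k}(\Unif[-1/2,1/2])$; both are routine.
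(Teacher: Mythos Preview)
Your proposal is correct and follows essentially the same route as the paper: compute $\Upsilon_k(T_n)$ via the identity $\Upsilon_k(T)=\sum_u z_u^k$ (this is \refL{lem:common} in the paper), apply \refT{thm:fixed} with the scaling $\sqrt{\Upsilon_2(T_n)}$, rescale to normalization by $n$ using $\Upsilon_2(T_n)/n^2\to b/(b-1)$, and then identify the limit with the explicit series $X$ by matching cumulants using $\kappa_{2k}(\Unif[-1/2,1/2])=B_{2k}/(2k)$ (the paper records this as \refR{RU}). The only cosmetic difference is that for MGF convergence of $X_n$ you argue uniform integrability directly from \eqref{mgfIT*}, whereas the paper transfers MGF convergence from $X_n'$ to $X_n$ via the deterministic rescaling; both are fine.
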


\subsubsection*{Random trees}

We move on to random trees. We consider generating a random tree $T_n$ and, conditioning on $T_n$, labeling its nodes uniformly at random. The relation \eqref{eq:halftpl} is maintained for random trees:
\begin{equation}\label{eq:randomhalftpl}
\E{I(T_n)} = \E{\E{I(T_n)\mid T_n}} = \frac12\E{\U(T_n)}.
\end{equation}
The deviation of $I(T_n)$ from its mean can be taken to mean two different
things. Consider for some scaling function $s(n)$, 
\begin{equation}\label{eq:xy}
\XX_n = \frac{I(T_n) - \E{I(T_n)}}{s(n)}, \qquad 
\YY_n =  \frac{I(T_n) - \E{I(T_n)\mid T_n}}{s(n)}
=\frac{I(T_n) - \frac12\U(T_n)}{s(n)}.
\end{equation}
Then $\XX_n$ and $\YY_n$
each measure the deviation of $I(T_n)$, unconditionally and conditionally.
They are related by the identity 
\begin{equation}
  \label{eq:xy:z}
\XX_n = \YY_n + \ZZ_n/2,
\end{equation}
where
\begin{equation}\label{eq:zz}
\ZZ_n = \frac{\U(T_n) - \E{\U(T_n)}}{s(n)}.
\end{equation}

In the case of fixed trees \(\ZZ_{n} = 0\) and $\XX_n = \YY_n$, but for
random trees we 
consider the sequences separately.

We consider two classes of random trees --- split trees and conditional
Galton--Watson trees.

\subsubsection*{Split trees}

The first class of random trees which we study are split trees. They were introduced
by \citet{MR1634354} to encompass many families of trees that are frequently used in
algorithm analysis, e.g., binary search trees \cite{MR0142216}, $m$-ary search trees
\cite{MR0216622}, quad trees \cite{Finkel1974}, median-of-\((2k+1)\) trees \cite{w76}, 
fringe-balanced trees \cite{MR1236537}, digital search trees \cite{Coffman:1970} and random simplex
trees \cite[Example 5]{MR1634354}.

A split tree can be constructed as follows. Consider a rooted infinite
$b$-ary tree 
where
each node is a bucket of finite capacity $s$. 
We place $n$ balls at the root, and the balls
individually trickle down the tree in a random fashion until no bucket is above capacity. Each node draws a {\em split vector} $\cV = (V_1,\dots,V_b)$ from a common distribution, where $V_i$ describes the probability that a ball passing through the node continues to the $i$th child.  The trickle-down procedure is defined precisely
in Section \ref{sec:split}. Any node $u$ such that the subtree rooted as $u$ contains no balls
is then removed, and we consider the resulting tree $T_n$.

In the context of split trees we differentiate between $I(T_n)$ (the number of inversions on {\em
    nodes}), and $\bI(T_n)$ (the number of inversions on {\em balls}). In the former case, the nodes
(buckets) are given labels, while in the latter the individual balls are given labels. For balls
$\b_1,\b_2$, write $\b_1 < \b_2$ if the node containing $\b_1$ is a proper ancestor of the node
containing $\b_2$; if $\b_1,\b_2$ are contained in the same node we do not compare their labels.
Define
$$
\bI(T_n) = \sum_{\b_1 < \b_2} {\ind}_{\l(\b_1) > \l(\b_2)}.
$$
Similarly define $\bU(T_n)$ as the total path length on balls,
i.e., the sum of the depth of all balls.
And let
\begin{equation}\label{eq:ballxyz}
\bX_n = \frac{\bI(T_n) - \E{\bI(T_n)}}{n}, \quad \bY_n = \frac{\bI(T_n) - s_0\bU(T_n)/2}{n}, \quad \bW_n = \frac{\bU(T_n) - \E{\bU(T_n)}}{n}.
\end{equation}
Here $s_0$ is a fixed integer denoting the number of balls in any internal node, and we have $\bX_n
= \bY_n + s_0\bW_n/2$ (formally justified in Section \ref{sec:split}). The following theorem gives
the limiting
distributions of the random vector $(\bX_n, \bY_n, \bW_n)$. In Section \ref{sec:splitnodes} we state a similar result for
$(\XX_n, \YY_n, \ZZ_n)$ under stronger assumptions. Note that the concepts are identical for any class
of split trees where each node holds exactly one ball, such as binary search trees, quad trees,
digital search trees and random simplex trees.

Let \(\dM\) denote the Mallows metric, also called the minimal $\ell_2$ metric (defined in Section
\ref{sec:split}).  Let \(\mZeroD\) be the set of probability measures on
\(\R^{d}\) with zero mean and finite 
second moment.

\begin{theorem}\label{thm:split}
Let \(T_n\) be a split tree and let $\cV = (V_1,\dots,V_b)$ be a split vector. Define
$$
\m = -\sum_{i=1}^b \E{V_i\ln V_i}, \qquad \text{and} \qquad \Toll(\cV) = \frac{1}{\m} \sum_{i=1}^b V_i\ln V_i.
$$
Assume that \(\Prob{\exists i:V_i = 1} < 1\) and \(s_{0} > 0\). 
Let $\bVec$ be the unique solution in \(\mZeroIII\) for the
system of
fixed-point equations
\begin{equation}
    \left[
    \begin{aligned}
        & \bX \\
        & \bY \\
        & \bW
    \end{aligned}
    \right]
    \eql
    \left[
    \begin{aligned}
        & \sum_{i=1}^b V_i \bX^{(i)} + \sum_{j=1}^{s_0} U_j + \frac{s_0}{2} \Toll(\cV)  \\
        & \sum_{i=1}^b V_i \bY^{(i)} + \sum_{j=1}^{s_0} (U_j-1/2) \\
        & \sum_{i=1}^b V_i \bW^{(i)} + 1 + \Toll(\cV)
    \end{aligned}
    \right]
    .
    \label{eq:Y}
\end{equation}
Here \( (V_{1},\dots,V_{b})\), \(U_{1},\dots,U_{s_{0}}\), 
\(
    (\bX^{(1)}, \bY^{(1)}, \bW^{(1)}),
    \dots,
    (\bX^{(b)}, \bY^{(b)}, \bW^{(b)})
\)
are independent,
with \(U_{j} \sim \Unif[0,1]\) for \(j=1,\dots,s_0\), and
\(\bVecI \sim \bVec\) for \(i = 1,\dots,b\).
Then the sequence $\bVecN$ defined in
\eqref{eq:ballxyz} converges to \(\bVec\) in \(\dM\) and in moment generating function within a
 neighborhood of the origin.
\end{theorem}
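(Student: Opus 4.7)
The plan is to apply the contraction method in the Mallows metric $\dM$. For existence and uniqueness of the fixed point, let $T\colon \mZeroIII \to \mZeroIII$ send $\mu$ to the distribution of the right-hand side of \eqref{eq:Y} when $\bVecIx \sim \mu$, independent of the split vector, the $U_j$'s, and the other copies. A standard optimal-coupling argument (in which cross terms vanish since $\mu \in \mZeroIII$ has zero mean) gives $\dM(T\mu, T\nu)^2 \le \bigpar{\sum_{i=1}^b \E{V_i^2}}\, \dM(\mu,\nu)^2$. Since $V_i \ge 0$ and $\sum V_i = 1$, we have $\sum V_i^2 \le \max_i V_i \le 1$ with equality only when some $V_i=1$, so the hypothesis $\Prob{\exists i: V_i = 1} < 1$ yields $\sum \E{V_i^2} < 1$, $T$ is a strict contraction, and Banach's theorem provides a unique fixed point $\bVec \in \mZeroIII$.

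For the convergence $\bVecN \dMto \bVec$, I would first set up the recursion. Let $n_i$ be the number of balls in the $i$-th subtree of the root, so $\sum n_i = n - s_0$. Splitting pairs of balls and ball-depths by subtree gives
\begin{align*}
    \bI(T_n) &= \sum_{i=1}^b \bI(T_{n_i}) + J_n, \\
    \bU(T_n) &= \sum_{i=1}^b \bU(T_{n_i}) + (n-s_0),
\end{align*}
where $J_n$ counts inversions whose ancestor ball lies in the root bucket. The uniform labeling makes $J_n$ a deterministic function of the ranks of the $s_0$ root balls among $\{1,\ldots,n\}$, and $(J_n - \E{J_n})/n$ converges in $\dM$ to $\sum_{j=1}^{s_0}(U_j - 1/2)$. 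Combined with the known asymptotic $\E{\bU(T_m)} = (m/\m)\log m + c m + o(m)$ for split trees \cite{MR3025680,MR1634354}, centering and normalizing by $n$ yields
\begin{equation*}
    \bVecN \eqdd \sum_{i=1}^b \frac{n_i}{n}\, \bigl(\bX_{n_i}, \bY_{n_i}, \bW_{n_i}\bigr)^{(i)} + \Delta_n,
\end{equation*}
where the triples indexed by $(i)$ are conditionally independent given $(n_i)$, and $\Delta_n \to \Delta_\infty$ in $\dM$ with $\Delta_\infty$ the toll appearing in \eqref{eq:Y} (the crucial calculation is $n^{-1}\bigpar{\sum_i n_i \log n_i - n\log n} \to \sum_i V_i \log V_i$, producing the $\Toll(\cV)$ terms). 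Combining the contraction inequality with an induction on $n$ then yields $\dM(\bVecN,\bVec) \to 0$.

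The main technical obstacle will be uniform control of all error terms. In particular, the logarithmic remainder in $\E{\bU(T_m)}$ must be handled carefully because $n_i$ fluctuates on the scale of $n$, and the approximation of $J_n$ by its limit requires $L^2$-control on the root-ball rank variables; one also needs a uniform $\dM$-bound on $\bVecN$ to close the induction, for which I would argue by a second induction using the variance formulas already established from \refT{thm:cumulant}.

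To upgrade to m.g.f.\ convergence in a neighborhood of $0$, I would use the bound $\E{e^{t(\bI(T_n) - \E{\bI(T_n) \mid T_n})}} \le \exp\bigpar{t^2 \Upsilon_2^{\mathrm{ball}}(T_n)/8}$ from \refT{thm:cumulant} (applied to the effective tree on balls), together with the recursion $\Upsilon_2^{\mathrm{ball}}(T_n) \le C n^2 + \sum_i \Upsilon_2^{\mathrm{ball}}(T_{n_i})$ and the condition $\sum \E{V_i^2} < 1$, to deduce $\E{\Upsilon_2^{\mathrm{ball}}(T_n)^k} = O(n^{2k})$ for every $k$. This provides uniform integrability of $\{e^{t\bX_n}\}$ for small $|t|$, lifting $\dM$ convergence to m.g.f.\ convergence; an analogous Efron--Stein style bound on $\E{e^{t(\bU(T_n) - \E{\bU(T_n)})}}$ handles $\bW_n$, and $\bY_n = \bX_n - (s_0/2)\bW_n$ then follows, completing the joint m.g.f.\ convergence.
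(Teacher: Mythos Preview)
Your overall strategy---contraction in $\dM$ for the fixed point and for $\bVecN\dMto\bVec$, then uniform integrability for the m.g.f.---is the same as the paper's. Two points deserve attention, one a genuine gap and one a methodological difference.

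\textbf{The toll function.} You write the asymptotic as $\E{\bU(T_m)}=(m/\mu)\log m+cm+o(m)$ with a constant $c$. That is only correct in the non-lattice case. In general (Broutin--Holmgren) the linear term is $m\,\varpi(\ln m)$ with $\varpi$ continuous and periodic of period $d=\sup\{a\ge0:\Prob{\ln V_1\in a\bbZ}=1\}$. Your ``crucial calculation'' $n^{-1}\bigl(\sum_i n_i\log n_i-n\log n\bigr)\to\sum_i V_i\log V_i$ is fine, but the linear terms contribute $\sum_i (n_i/n)\varpi(\ln n_i)-\varpi(\ln n)$, which does \emph{not} vanish just because $\sum n_i/n\to1$; you need that $\ln n_i-\ln n\to\ln V_i$ lies a.s.\ in the lattice $d\bbZ$, so that periodicity kills the difference. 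Without this observation the toll convergence $\Delta_n\to\Delta_\infty$ fails in the lattice case. The paper isolates exactly this step (\refL{lem:toll:func}).

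\textbf{The m.g.f.\ step.} The paper takes a more direct route than yours: it observes that the toll vector $B_n(\bar n)$ in the recursion is deterministically bounded (since $\bZ_\rho/n\le s_0$ and $\bTollN$ is bounded), and then proves by induction on $n$ that $\E{e^{t\cdot\bVecN}}\le e^{K_t\|t\|^2}$ for $\|t\|$ small, using only $\sum_i(n_i/n)^2\le1$ and $\Prob{\sum_i V_i^2<1-\varepsilon}>0$ (Lemmas~\ref{lem:U:bound}--\ref{lem:mgf:J}). This handles all three coordinates at once and needs no separate treatment of $\bW_n$. Your plan---controlling $\E{(\Upsilon_2^{\mathrm{ball}}(T_n))^k}$ to bound $\E{e^{t\bY_n}}$, then an ``Efron--Stein style'' bound for $\bW_n$---can likely be made to work, but note that $\E{(\Upsilon_2/n^2)^k}=O(1)$ for each $k$ is not by itself enough for $\E{\exp(c\,\Upsilon_2/n^2)}<\infty$; you would need growth like $C^k k!$ in the implicit constants, which requires a more careful induction than you sketch. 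The paper's m.g.f.\ induction avoids this entirely.

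Finally, a minor point: rather than redoing the $\dM$ induction by hand, the paper invokes Neininger's general theorem, reducing the work to checking $L^2$ convergence of the toll and coefficient vectors; your outline is equivalent but longer.
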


The proof of Theorem \ref{thm:split} uses the contraction method, introduced by \citet{MR1104413}
for finding the total path length of binary search trees. 
The technique has been applied to 
$d$-dimensional quad trees by \citet{nr99} and to split trees in general by \citet{MR3025680}.  The
contraction method also has many other applications in the analysis of recursive algorithms, see,
e.g., \cite{MR2023025, MR1887306, MR1887296}. 

\begin{remark}
    We assume that \(s_0 > 0\), for otherwise we trivially have \(\bX_{n} = 0\) and \refT{thm:split}
    reduces to Theorem 2.1 in \cite{MR3025680}.
    \label{rk:s:knot}
\end{remark}

\begin{remark}
    In a recent paper, \citet{s17} showed that preferential attachment trees and random recursive
    trees can be viewed as split trees with infinite-dimensional split vectors. Thus we conjecture
    that the contraction method should also be applicable for these models and give results similar
    to \refT{thm:split}.
\end{remark}

\begin{remark}
Assume that the 
constant 
split vector $\cV = (1/b,\dots,1/b)$ is used and each node holds exactly one
ball (a
special case of digital search trees, see \cite[Example 7]{MR1236537}).
Then $\Toll(\cV) = -1$ and
\eqref{eq:Y} has the unique solution $\bVec = (X, X, 0)$, where \(X\) has the limiting
distribution for inversions in complete $b$-ary trees (see \refT{thm:complete}). This is as expected, as the
shape of a split tree with these parameters is likely to be very similar to a complete $b$-ary tree.
\end{remark}

\subsubsection*{Conditional Galton--Watson trees}

Finally, we consider conditional Galton--Watson trees (or equivalently, simply generated
trees), which were introduced by \citet{bienayme} and \citet{watson1875} to
model the evolution of populations. A Galton--Watson tree starts with a root node. Then recursively,
each node in the tree is given a random number of child nodes. 
The numbers of children are drawn independently from the same distribution \(\xi\) called the \emph{offspring
distribution}.

A conditional Galton--Watson tree \(T_n\) is a Galton--Watson tree conditioned on having \(n\) nodes.
It generalizes many uniform random tree models, e.g., Cayley trees, Catalan trees, binary trees,
\(b\)-ary trees, and Motzkin trees. For a comprehensive survey, see \citet{j12}.
For recent developments, see \cite{cai16, dhs17, j16, k17}.

In a series of three seminal papers, \citeauthor{a91} showed that \(T_n\) converges under re-scaling to
a \emph{continuum random tree}, which is a tree-like object constructed from a Brownian excursion
\cite{MR1085326, a91, MR1207226}.  Therefore, many asymptotic properties of conditional
Galton--Watson trees, such as the height and the total path length, can be derived from properties of
Brownian excursions \cite{a91}. Our analysis of inversions follows a similar route.  In
particular, we relate \(I(T_n)\) to the \emph{Brownian snake} studied by 
e.g., \citet{jm05}.

In the context of Galton--Watson trees, Aldous \cite[Corollary 9]{a91} showed that $n^{-3/2}\U(T_n)$
converges to an Airy distribution. We will see that the standard deviation of $I(T_n) -
\frac12\U(T_n)$ is of order
$n^{5/4} \ll n^{3/2}$, which by the decomposition \eqref{eq:xy:z} implies that
$n^{-3/2}I(T_n)$ converges to the same Airy distribution, recovering one of the main results of
\citet[Theorem 5.3]{ps12}. Our contribution for conditional Galton--Watson trees
is a detailed analysis of $\YY_n$ under the scaling function $s(n) = n^{5/4}$.

Let $e(s), s\in[0,1]$ be the random path of a standard Brownian excursion,
and define $C(s, t)\eqd \allowbreak C(t,s) \eqd 2\min_{s\leq u\leq t} e(u)$ for
$0\le s\le t\le 1$.

We define a random variable, see \cite{j02},
\begin{equation}\label{eq:etadef} 
    \eta \eqd \int_{[0,1]^2} C(s, t) \mathrm ds\ \mathrm dt
=4\int_{0\le s\le t\le 1}\min_{s\leq u\leq t} e(u).
\end{equation} 

\begin{theorem}\label{thm:galtonwatson}
Suppose $T_n$ is a conditional Galton--Watson tree with offspring distribution $\xi$ such that
$\E{\xi} = 1$, $\Va{\xi} = \s^2 \in (0, \infty)$, 
and $\E{e^{\alpha\xi}}<\infty$ for some $\a>0$, 
and define
$$
\YY_n = \frac{I(T_n) - \frac12 \U(T_n)}{n^{5/4}}.
$$
Then we have
\begin{equation}\label{gw}
  \YY_n \inlaw \YY \eqd \frac{1}{\sqrt{12\s}} \sqrt{\eta}\ \cN,
\end{equation}
where $\cN$ is a standard normal random variable, independent from the random variable $\eta$
defined in \eqref{eq:etadef}.
Moreover, \(\E{e^{t\YY_n}} \to \E{e^{t\YY}} < \infty\) for all fixed \(t \in \R\). 
\end{theorem}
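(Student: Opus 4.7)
The approach is to condition on the tree $T_n$. By \refL{lem:independence}, given $T_n$,
\[
I(T_n) - \tfrac12\U(T_n) \eqdd \sum_v \bigpar{Z_v - (z_v-1)/2}
\]
is a sum of independent bounded centered random variables, so \refT{thm:cumulant} yields all conditional cumulants of $\YY_n$ in closed form:
\[
\kk_{2k-1}(\YY_n\mid T_n)=0,
\qquad
\kk_{2k}(\YY_n\mid T_n) = \frac{B_{2k}}{2k}\cdot\frac{\U_{2k}(T_n)-n}{n^{5k/2}},
\quad k\ge 1.
\]
The proof then reduces to (a) identifying the limit of the conditional variance and showing all higher conditional cumulants vanish, and (b) upgrading the resulting conditional Gaussian limit to convergence of unconditional moment generating functions for every $t\in\R$.

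For (a), I would first show $\U_2(T_n)/n^{5/2} \dto \eta/\s$. This uses Aldous's theorem that the scaled depth-first walk of $T_n$ converges to $2e/\s$; since $c(v_s,v_t)=h(\mathrm{LCA}(v_s,v_t))+1$, the LCA-count for two nodes at DFS positions $\lfloor sn\rfloor,\lfloor tn\rfloor$ satisfies $c(v_s,v_t)/\sqrt n \to C(s,t)/\s$, and a Riemann-sum/uniform-integrability argument gives
\[
\frac{\U_2(T_n)}{n^{5/2}} = \frac{1}{n^{2}}\sum_{v_1,v_2}\frac{c(v_1,v_2)}{\sqrt n} \to \frac{1}{\s}\int_{[0,1]^{2}} C(s,t)\,\mathrm ds\,\mathrm dt = \frac{\eta}{\s}.
\]
For the higher cumulants, I would use the elementary bound $\U_{2k}(T_n)=\sum_v z_v^{2k}\le n^{2k-2}\U_2(T_n)$, which gives $\U_{2k}(T_n)/n^{5k/2} = O_p(n^{(1-k)/2}) \to 0$ for every $k\ge 2$. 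Together with the classical estimate $|B_{2k}|/(2k)! \le 2/(2\pi)^{2k}$, this controls the tail of the cumulant series in $t$.

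Passing to a Skorokhod coupling on which $\U_2(T_n)/n^{5/2}\asto \eta/\s$, the cumulant expansion now reads, for each fixed $t\in\R$ and all $n$ large enough (so that $|t|z_v/n^{5/4}$ is small uniformly in $v$),
\[
\log\E{e^{t\YY_n}\mid T_n} = \frac{t^2}{24 n^{5/2}}\bigpar{\U_2(T_n)-n} + \sum_{k\ge 2}\frac{t^{2k}}{(2k)!\,n^{5k/2}}\cdot\frac{B_{2k}}{2k}\bigpar{\U_{2k}(T_n)-n} \asto \frac{t^2\eta}{24\s},
\]
which is the log-MGF of $N(0,\eta/(12\s))$. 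Hence, conditionally on $\eta$, $\YY_n$ converges in distribution to $N(0,\eta/(12\s))$, i.e.\ unconditionally to $\sqrt{\eta/(12\s)}\,\cN$ with $\cN$ an independent standard normal --- exactly the claimed limit $\YY$.

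Finally, to obtain convergence of \emph{unconditional} MGFs for every $t\in\R$, I would invoke the bound \eqref{mgfIT*}, namely $\E{e^{t\YY_n}\mid T_n}\le\exp\bigpar{t^2\U_2(T_n)/(8n^{5/2})}$, together with a uniform exponential-moment bound $\sup_n \E{\exp\bigpar{c\,\U_2(T_n)/n^{5/2}}}<\infty$ for every $c>0$. The latter should follow from the hypothesis $\E{e^{\a\xi}}<\infty$ via the subgaussian-type concentration for path-length-type functionals of conditional Galton--Watson trees, in the spirit of the estimates underlying Janson's CRT convergence. Uniform integrability of $e^{t\YY_n}$ then lets us pass the limit through the outer expectation. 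The main obstacle is precisely this uniform exponential-moment estimate for $\U_2(T_n)/n^{5/2}$: the conditional-cumulant calculation is immediate from \refT{thm:cumulant} and the CRT asymptotics, but without a strong tail bound on $\U_2(T_n)/n^{5/2}$ one only recovers MGF convergence in a neighborhood of $0$.
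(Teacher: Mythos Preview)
Your route to the limit law is genuinely different from the paper's and is essentially sound. The paper does \emph{not} argue via conditional cumulants at all: it rewrites $I(T_n)-\tfrac12\U(T_n)$ (up to $O(n)$) as the head-sum $J(T_n)=\sum_v\Phi_v$ of a discrete snake $\Phi_v=\sum_{u\le v}Q_u$, identifies $J(T_n)/n^{5/4}$ with $\int_0^1 r_n(s)\,ds$ for the rescaled snake $r_n$, and then invokes the Janson--Marckert convergence $(r_n,v_n)\dto(r,v)$ together with the Janson--Chassaing identification $\int_0^1 r\eql(12\sigma)^{-1/2}\sqrt\eta\,\cN$. Your approach replaces this process-level machinery by the single scalar input $\U_2(T_n)/n^{5/2}\dto\eta/\sigma$ (which is indeed the CRT limit of the $2$-common path length $\U'_2$, cf.\ \cite{j02}) plus the trivial bound $\U_{2k}\le n^{2k-2}\U_2$, and then runs a conditional method-of-cumulants CLT. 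This is more elementary in that it avoids snake convergence entirely; the cost is that the Riemann-sum passage from $\sum_{v_1,v_2}c(v_1,v_2)$ to the integral over DFS positions needs a careful treatment of multiplicities and uniform integrability, and the step from ``conditional MGF $\to$ a.s.'' to ``unconditional law $\to$'' is cleanest if you switch to characteristic functions (bounded by $1$) before taking the outer expectation.

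The genuine gap is exactly the one you flag at the end: the uniform exponential moment $\sup_n\E{\exp\bigl(c\,\U_2(T_n)/n^{5/2}\bigr)}<\infty$ for every $c>0$. The paper resolves this not by any delicate concentration for $\U_2$, but by the crude pointwise bound
\[
\U_2(T_n)=\sum_{v_1,v_2}c(v_1,v_2)\le n^2\bigl(H_n+1\bigr),
\]
where $H_n$ is the height of $T_n$, and then by the sub-Gaussian tail $\Prob{H_n>x}\le C\exp(-c\,x^2/n)$ of Addario-Berry--Devroye--Janson (this is where the assumption $\E{e^{\alpha\xi}}<\infty$ enters). This yields $\sup_n\E{\exp\bigl(c\,H_n/\sqrt n\bigr)}<\infty$ for every $c>0$, hence the bound you need on $\U_2(T_n)/n^{5/2}$, and then \eqref{mgfIT*} gives uniform integrability of $e^{t\YY_n}$ for every real $t$. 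With this one missing ingredient your argument closes completely.
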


The moments of $\eta$ and $Y$ are known \cite{MR2108865}, see
Section \ref{sec:galtonwatson}.

The rest of the paper is organized as follows.  In Section \ref{sec:prelude}, we prove
\refL{lem:independence} and \refT{thm:cumulant}. The results for fixed trees (Theorems
\ref{thm:fixed}, \ref{thm:complete}) are presented in Section \ref{sec:gen}.
Split trees and conditional Galton--Watson trees are considered in Sections \ref{sec:split} and
\ref{sec:galtonwatson} respectively. Sections \ref{sec:split} and \ref{sec:galtonwatson} are
essentially self-contained, and the interested reader may skip ahead.

\section{A fixed tree}\label{sec:prelude}

In this section we study a fixed, non-random tree $T$.  We begin with proving 
\refL{lem:independence}, which shows that
\(I(T)\) is a sum of independent uniform random variables.

\begin{proof}[Proof of {\refL{lem:independence}}]
We define $Z_u = \sum_{v :  v> u} {\ind}_{\l(u) > \l(v)}$ and note that
\begin{equation}
I(T) \eqd \sum_{u < v} {\ind}_{\l(u) > \l(v)} = \sum_{u\in V} \left(\sum_{v:v > u} {\ind}_{\l(u) > \l(v)}\right) = \sum_{u\in V} Z_u,
\end{equation}
showing \eqref{it}. 
Let $T_u\subseteq T$ denote the subtree rooted at $u$. It is clear that 
conditioned on the set $\l(T_u)$,
$\l$ restricted to $T_u$ is a uniformly random labeling of $T_u$ into
$\l(T_u)$.
Recall that \(z_u\) denotes the size of \(T_u\).
If the elements of $\l(T_u)$ are $\ell_1 < \dots < \ell_{z_u}$
and if $\l(u) = \ell_i$, then $Z_u = i-1$. As $\l(u)$ is uniformly distributed,
so is $Z_u$. 

We prove independence of the $Z_v$ by induction on $V$. The base case $|V| = 1$ is trivial.
Let $T_1,\dots,T_d$ be the subtrees rooted at the children of the root
$\rho$, and 
condition on the sets $\l(T_1),\dots,\l(T_d)$.
Given these sets, $\l$ restricted to $T_i$ is a uniformly random labeling
of $T_i$ using the given labels $\l(T_i)$, and these labelings are
independent for different $i$.
Hence, conditioning on $\l(T_1),\dots,\l(T_d)$, the $d$ families
$(Z_v)_{v\in T_i}$ are independent, and each is distributed as the
corresponding family for the tree $T_i$.

Consequently, by induction, still conditioned on $\l(T_1),\dots,\l(T_d)$,
 $(Z_v)_{v\neq\rho}$ are independent, with 
$Z_v \sim \Unif\{0,1,\dots,z_v-1\}$. 
Furthermore, $Z_\rho=\l(\rho)-1$, and $\l(\rho)$ is determined by
$\l(T_1),\dots,\l(T_d)$ (as the only label not in $\bigcup_1^d\l(T_i)$).
Hence the family  $(Z_v)_{v\neq\rho}$ of  independent random variables is
also independent of $Z_\rho$, and thus 
 $(Z_v)_{v\in V}$ are independent.
This completes the induction, and thus the proof.
\end{proof}

Our first use of the representation in \refL{lem:independence} is to prove \refT{thm:cumulant},
which gives both a formula for the moment generating function and explicit formulas for the
cumulants of $I(T)$ for a fixed $T$.  The proof begins with a simple lemma giving the cumulants and
the moment generating function of $Z_v$ in \refL{lem:independence}, from which \refT{thm:cumulant} will
follow immediately.

Recall that the Bernoulli numbers $B_k$ can be defined by their generating
function
\begin{equation}\label{Bernoulli}
  \sum_{k=0}^\infty B_k\frac{x^k}{k!} = \frac{x}{e^x-1}
\end{equation}
(convergent for $|x|<2\pi$), see, e.g., \cite[(24.2.1)]{NIST:DLMF}.
Recall also $B_0=1$, $B_1=-\frac12$ and $B_2=\frac{1}6$, and that
$B_{2k+1}=0$ for $k\ge1$.

\begin{lemma}\label{lem:Z}
Let $N\ge1$, and  let $Z_N$ be uniformly distributed on
$\{0,1,\dots,N-1\}$. Then 
$\E {Z_N}= (N-1)/2$,
$\Va {Z_N}=(N^2-1)/12$
and, more generally,
\begin{equation}\label{kk}
\kk_k(Z_N)=\frac{B_k}{k}(N^k-1),
\qquad k\ge2,
\end{equation}
where $B_k$ is the $k$-th Bernoulli number.
The moment generating function of \(Z_N\) is
\begin{equation}\label{mgfZ}
  \E{e^{tZ_N}} 
= \frac{e^{Nt}-1}{N(e^t-1)}.
\end{equation}
\end{lemma}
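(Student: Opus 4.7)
The plan is to derive the moment generating function first by a direct geometric sum, then read off the cumulants by taking its logarithm and invoking the generating function \eqref{Bernoulli} for Bernoulli numbers.

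First I would write
\[
\E{e^{tZ_N}} = \frac{1}{N}\sum_{j=0}^{N-1} e^{jt} = \frac{1}{N}\cdot\frac{e^{Nt}-1}{e^t-1},
\]
which is \eqref{mgfZ} (with the obvious limit interpretation at $t=0$). The values $\E{Z_N}=(N-1)/2$ and $\Va{Z_N}=(N^2-1)/12$ then either follow by a short direct computation on $Z_N$, or drop out of the cumulant formula below at $k=1,2$.

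For the cumulants, I would take logs. The cumulant generating function is
\[
\log \E{e^{tZ_N}} = \log\frac{e^{Nt}-1}{Nt} - \log\frac{e^t-1}{t}.
\]
The key step is to expand $\log\bigpar{(e^x-1)/x}$ in powers of $x$ using \eqref{Bernoulli}. Differentiating and writing $\tfrac{1}{e^x-1} = \tfrac{1}{x}\cdot\tfrac{x}{e^x-1}=\tfrac{1}{x}\sum_{k\ge 0}B_k x^k/k!$, I would obtain
\[
\frac{d}{dx}\log\frac{e^x-1}{x} = \frac{e^x}{e^x-1}-\frac{1}{x} = \frac{1}{2} + \sum_{k\ge 2}\frac{B_k x^{k-1}}{k!},
\]
using $B_1=-1/2$. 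Integrating from $0$ (and noting $\log((e^x-1)/x)\to 0$ as $x\to 0$) gives
\[
\log\frac{e^x-1}{x} = \frac{x}{2} + \sum_{k\ge 2}\frac{B_k x^k}{k\cdot k!}.
\]
Substituting $x=Nt$ and $x=t$ and subtracting yields
\[
\log \E{e^{tZ_N}} = \frac{(N-1)t}{2} + \sum_{k\ge 2}\frac{B_k(N^k-1)}{k}\cdot\frac{t^k}{k!}.
\]
Comparing coefficients with the definition $\log\E{e^{tZ_N}}=\sum_{k\ge 1}\kk_k(Z_N)t^k/k!$ gives $\kk_1(Z_N)=(N-1)/2$ and \eqref{kk} for all $k\ge 2$. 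In particular $B_2=1/6$ recovers $\Va{Z_N}=(N^2-1)/12$.

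The main (and only nontrivial) obstacle is justifying the expansion of $\log((e^x-1)/x)$; I chose to derive it by differentiation and integration from \eqref{Bernoulli} rather than citing it, since this keeps the proof self-contained and makes the appearance of $B_k/k$ transparent. A convergence remark is also needed: the series for $x/(e^x-1)$ converges for $|x|<2\pi$, so the same holds for the computed cumulant generating function, which suffices for identifying cumulants by coefficient comparison.
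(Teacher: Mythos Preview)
Your proof is correct and follows essentially the same route as the paper: compute the moment generating function as a geometric sum, write the cumulant generating function as $f(Nt)-f(t)$ with $f(x)=\log\bigl((e^x-1)/x\bigr)$, and expand $f$ by differentiating, invoking the Bernoulli generating function \eqref{Bernoulli}, and integrating back. The paper's only cosmetic difference is that it multiplies $f'(t)$ by $t$ before applying \eqref{Bernoulli} rather than dividing $x/(e^x-1)$ by $x$, and it records the analyticity in $|t|<2\pi$ slightly earlier.
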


\begin{proof}
This is presumably well-known, but we include a proof for completeness.
The moment generating function of $Z_N$ is
\begin{equation}\label{mgfZ2}
  \E{e^{tZ_N}} = \frac{1}N\sum_{j=0}^{N-1} e^{jt} 
=  \frac{e^{Nt}-1}{N(e^t-1)},
\end{equation}
verifying \eqref{mgfZ}.
The function $(e^t-1)/t$ is analytic and non-zero in the disc $|t|<2\pi$,
and thus has there a well-defined analytic logarithm
\begin{equation}\label{ft}
  f(t):=\log\frac{e^t-1}{t},
\end{equation}
with $f(0)=0$.
By \eqref{mgfZ2} and \eqref{ft}, 
the cumulant generating function of $Z_N$ can be written as
\begin{equation}\label{psi2}
  \log\E{e^{tZ_N}} = f(Nt)-f(t).
\end{equation}
Differentiating \eqref{ft} yields (for $0<|t|<2\pi$)
\begin{equation}
  f'(t) = \frac{d}{dt}\bigpar{\log(e^t-1)-\log t}
=\frac{e^t}{e^t-1}-\frac{1}t
=\frac{1}{e^t-1}+1-\frac{1}t
,
\end{equation}
and thus, using \eqref{Bernoulli}, 
\begin{equation}
 t f'(t)
=\frac{t}{e^t-1}+t-1
=  \sum_{k=0}^\infty B_k\frac{t^k}{k!} -1+t 
=  \sum_{k=2}^\infty B_k\frac{t^k}{k!} +\frac12 t. 
\end{equation}
Consequently,
\begin{equation}\label{ftx}
 f(t)
=  \sum_{k=2}^\infty \frac{B_k}{k}\frac{t^k}{k!} +\frac12 t,
\end{equation}
and thus by \eqref{psi2}
\begin{equation}
\log\E{e^{tZ_N}}
=  \sum_{k=2}^\infty \frac{B_k}{k}(N^k-1)\frac{t^k}{k!} +\frac{N-1}2 t 
. 
\end{equation}
The results on cumulants follow. 
(Of course, $\E{Z_N}$ is more  simply calculated directly.)
\end{proof}

\begin{remark}\label{RU}
  Similarly, using \eqref{ftx}, or by \eqref{kk} and a limiting argument,
if $U\sim\Unif[0,1]$ or $U\sim\Unif[-\frac12,\frac12]$, then
$\kk_k(U)=B_k/k$, $k\ge2$.
\end{remark}

Recall that in the introduction, we defined
\[
    c(v_{1},\dots,v_k) \eqd |\{u:u\le v_1, \dots, u \le v_k\}|,
\]
i.e., \(c(v_{1},\dots,v_{k})\) is the number of common ancestors of \(v_{1},\dots,v_{k}\).
\begin{lemma}
    \label{lem:common}
    Let \(z_v\) denote the number of vertices in subtree rooted at \(v\). Then for $k\geq 1$,
    \[
        \sum_{v} z_v^{k} = 
        \U_{k}(T) \eqd
        \sum_{v_1,\dots,v_k} c(v_{1},\dots,v_{k})
        .
    \]
\end{lemma}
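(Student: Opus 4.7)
\textbf{Proof plan for \refL{lem:common}.} The plan is to prove the identity by a standard double counting / switch-of-summation argument, rewriting $c(v_1,\ldots,v_k)$ as a sum of indicators and then interchanging the order of summation.

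First I would expand the definition of $c$ as
\begin{equation}
c(v_1,\ldots,v_k) = \sum_{u \in V} \ind_{u \le v_1}\,\ind_{u \le v_2}\cdots\ind_{u \le v_k},
\end{equation}
which is immediate from the definition $c(v_1,\ldots,v_k) = |\{u : u \le v_1,\ldots,u \le v_k\}|$. Substituting this into the definition \eqref{Uk} of $\Upsilon_k(T)$ and swapping the order of summation gives
\begin{equation}
\Upsilon_k(T) = \sum_{v_1,\ldots,v_k} \sum_{u \in V} \prod_{i=1}^k \ind_{u \le v_i} = \sum_{u \in V} \sum_{v_1,\ldots,v_k} \prod_{i=1}^k \ind_{u \le v_i} = \sum_{u \in V} \biggl(\sum_{v \in V} \ind_{u \le v}\biggr)^{\!k}.
\end{equation}

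The final step is to recognize that for any fixed $u$, the set $\{v \in V : u \le v\}$ is precisely the vertex set of the subtree $T_u$ rooted at $u$, and hence has cardinality $z_u$. Therefore the inner sum equals $z_u^k$, and summing over $u$ yields $\sum_u z_u^k$, which matches the left-hand side (after relabeling $u$ as $v$). There is no real obstacle here; the only thing to be careful about is the convention that ``$u \le v$'' means $u$ is an ancestor of $v$ including the case $u = v$, so that $u$ itself is counted in $T_u$ and the count $z_u$ is correct.
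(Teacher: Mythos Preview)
Your proof is correct and takes essentially the same approach as the paper's: both expand the relevant quantity as a sum of indicator products $\ind_{u\le v_1}\cdots\ind_{u\le v_k}$ and interchange the order of summation. The only cosmetic difference is direction---the paper starts from $\sum_u z_u^k$ and arrives at $\sum_{v_1,\ldots,v_k} c(v_1,\ldots,v_k)$, while you go the other way---but the substance is identical.
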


\begin{proof}
It is easily seen that
\begin{equation}
    \sum_{u} z_u = \sum_{u, v} {\ind}_{\{u \le v\}} = \sum_{v} c(v) 
    .
\end{equation}
Similarly, 
\begin{equation}
    \sum_{u} z_u^2 = \sum_{u,v,w}{\ind}_{\{u \le v, u \le w\}} = \sum_{v,w} c(v, w).
\end{equation}
More generally, 
\begin{equation*}
    \sum_{u}z_u^k 
    = \sum_u \prod_{i=1}^k \left(\sum_{v_i} {\ind}_{\{u \le v_i\}}\right) 
    = \sum_{v_1,\dots,v_k} c(v_{1},\dots,v_{k})
    .
    \qedhere
\end{equation*}
\end{proof}

\begin{remark}
Observe that all common ancestors of the $k$
vertices must lie on a path; stretching from the last common ancestor to the root. Define a related
parameter $\U'_k(T)$ to be 
the sum over all $k$-tuples of
the length of this path  (rather than number of vertices in the path). 
We call this the \emph{$k$-common path length}. Now $\U'_1(T)=\U(T)$ and
$\U'_2(T)$ has appeared in various contexts, see for example \cite{j02}
(where it is denoted $Q(T)$).
Let $v_1 \land v_2$ denote the last common ancestor of the vertices $v_1$
and $v_2$. 
It is easy to see that, with $n=|T|$,
\[
    \Upsilon'_{k}(T) 
    \eqd
\sum_{v_1,\dots,v_k} h(v_{1} \land \dots \land v_{k})
    =
    \sum_{v_1,\dots,v_k} (c(v_{1},\dots,v_{k}) -1)   
    =
     \U_{k}(T) - n^{k},\]
and by Lemma~\ref{lem:common}, $\U_{k}(T)=\sum_v z_v^k$, so $\U'_{k}(T)=\sum_{v\neq \rho} z_v^k$.
\end{remark}

\begin{remark}
Let $S_k$ be a star with $k$ leaves $\ell_1, \dots,\ell_k$ and root
$o$. Then $\U_k(T)$ is the number of embeddings $\phi:V(S_k)\rightarrow V(T)$ such that
$\phi(o) \leq \phi(\ell_i)$ for each $i$.  Similarly the $k$-common path-length $\U'_k(T)$ is the
number of such embeddings $\phi$ such that $\phi(o) < \phi(\ell_i)$ for each $i$.
\end{remark}

\begin{proof}[Proof of {\refT{thm:cumulant}}]
    Since cumulants are additive 
for sums of independent random variables,
    an immediate consequence of Lemmas \ref{lem:independence} and
    \ref{lem:Z} is that
\begin{equation}
        \kk_k(I(T))=\frac{B_k}{k}
        \sum_{v\in V} \bigpar{z_v^k-1}
        = \frac{B_k}{k}(\U_k(T)-|V|)
        ,
        \qquad k\ge1.
        \label{eq:cum:z}
    \end{equation}
    where the last equality follows from \refL{lem:common}.
    The fact that $\E{I(T)}=\frac12\U(T)$ was noted already in
    \eqref{eq:halftpl}.

Similarly, \eqref{mgfIT} follows from
\refL{lem:independence} and \eqref{mgfZ2}.

For the estimate \eqref{mgfIT*}, note first, e.g.\ by Taylor expansions,
that $\cosh x \le e^{x^2/2}$ for every real $x$.
It follows that if $U$ is any symmetric random variable with $|U|\le a$,
then 
\begin{equation}\label{etu}
\E {e^{tU}}=\E {\cosh(tU)}\le e^{a^2t^2/2}.  
\end{equation}
(See \cite[(4.16)]{Hoeffding} for a more general result.)
\refL{lem:independence} thus implies,
applying \eqref{etu} to each $Z_v-\E{Z_v}$, 
\begin{equation}
  \E{e^{t(I(T)-\E{I(T)}}} 
=\prod_v\E{e^{t(Z_v-\E{Z_v})}}
\le \prod_v e^{t^2(z_v-1)^2/8}
=
e^{t^2\sum_v(z_v-1)^2/8},
\end{equation}
which yields \eqref{mgfIT*}, using also \refL{lem:common}.
\end{proof}

\section{A sequence of fixed trees}\label{sec:gen}

In this section, we study
$$
X_n 
=
\frac{I(T_n) - \E{I(T_n)}}{s(n)}
,
$$
where \(T_n\) is a sequence of fixed trees and \(s(n)\) is an appropriate normalization factor.  
We
start by proving \refT{thm:fixed}, a sufficient condition for \(X_n\) to
converge in distribution 
when \(s(n) = \sqrt{\Upsilon_{2}(T_n)}\).

\begin{proof}[Proof of {\refT{thm:fixed}}]
    First \(\kk_{1}(X_n) = \E{X_n} = 0\).
         For \(k \ge 2\), note that shifting a random variable does not change its \(k\)-th cumulant.
         Also note that \(\Upsilon_k(T_n) \eqd \sum_{v_1,\dots,v_k} c(v_1,\dots,v_k) \ge n^k\).
        Therefore, it follows from \refT{thm:cumulant} that
    \[
        \kk_{k}(X_n) 
        = \frac{ \kk_{k}(I(T_n)) }{(\Upsilon_{2}(T_n)-n)^{k/2}}
        = \frac{B_k}{k} \frac{\Upsilon_{k}(T_n)-n}{(\Upsilon_{2}(T_n)-n)^{k/2}}
        \sim \frac{B_k}{k} \frac{\Upsilon_{k}(T_n)}{\Upsilon_{2}(T_n)^{k/2}}
        , 
        \qquad
        k \ge 2.
    \]
    Recall that all odd Bernoulli numbers except \(B_1\) are zero. 
    Thus letting \(\zeta_k=0\) for all odd \(k\), 
    the assumption 
    that \( \Upsilon_{2k}(T_n)/\Upsilon_{2}(T_n)^{k} \to \zeta_{2k}\) for all $k \ge 1$
    implies that
    \[
        \kk_{k}(X_n) \to \frac{B_k}{k} \zeta_k, \qquad k \ge 1.
    \]
Since every moment can be expressed as a polynomial in cumulants, it follows
that every moment $\E{X_n^k}$ converges, $k\ge1$.
Thus to show that there exists an \(X\) such that \(X_n \inlaw X\), it suffices to show that the
moment generating function $\E{ e^{t X_n}}$ stays bounded for all small fixed
$t$;
we shall show that this holds for all real $t$.
In fact, using \refL{lem:common},
\begin{equation}
  \sum_v(z_v-1)^2
\le   \sum_v(z_v^2-1)
=
\U_2(T_n)-n
\le
\U_2(T_n)
.
\end{equation}
Hence, \eqref{mgfIT*} yields
\begin{equation}
  \E{e^{tX_n}}
\le \exp\Bigpar{\tfrac18 \bigpar{t/\sqrt{\U_2(T_n)}}^2\sum_v(z_v-1)^2}
\le \exp\bigpar{\tfrac18 t^2},
\qquad t\in\R.
\end{equation}
This and the moment convergence imply the claims in the theorem.
\end{proof}

\subsection{The complete \texorpdfstring{$b$}{b}-ary tree}\label{sec:complete}

We prove Theorem \ref{thm:complete}, which asserts that for complete $b$-ary
trees the limiting variable of $X_n$ is the unique $X$ for which $\kk_k(X) =
\frac{B_k}{k}\frac{b^{k-1}}{b^{k-1}-1}$ for even $k\geq 2$ and zero for
odd
 $k$. Fix $b\geq 2$.  In the complete $b$-ary tree 
of height $m$,
each node $v$ at depth $d\in\{0,1,\dots,m\}$ has subtree size $z_v =
a_{m, d} \eqd (b^{m-d+1}-1)/(b-1)$. Hence Lemma \ref{lem:independence} implies that
\(
X_n 
= \sum_{\dep=0}^m \sum_{j=1}^{b^\dep} {Z_{\dep, j}}/{n},
\)
where
\[
Z_{\dep, j} \sim \Unif
\left\{-\frac{a_{m, \dep}-1}{2},
-\frac{a_{m, \dep}-2}{2},
\dots,
\frac{a_{m, \dep}-2}{2},
\frac{a_{m, \dep}-1}{2}
\right\}
\]
are independent random variables.
Let \(U_{\dep,j}\) be independent \(\Unif[-\frac{1}{2},
\frac{1}{2}]\). Approximating $Z_{d, j} \approx U_{d, j}a_{m, d}$ and noticing that $n/a_{m,\dep}\approx b^d$, intuitively we should have for large $n$,
\begin{equation}
    X_n =
    \sum_{\dep=0}^m \sum_{j=1}^{b^\dep} \frac{a_{m,\dep}}{n} 
\cdot\frac{Z_{\dep, j}}{a_{m,\dep}}
    \approx 
    \sum_{\dep \ge 0} \sum_{j=1}^{b^\dep} \frac{U_{\dep,j}}{b^\dep} 
    \eqd X.
    \label{eq:b:limit}
\end{equation}

It is not difficult 
to show this rigorously by truncating the sums.
Also,
it is not difficult 
to prove \refT{thm:complete} by showing that \(\E{e^{tX_n}} \to \E{e^{tX}}\) for
all \(t \in \R\) and checking the cumulants of \(X\),
using \refR{RU}.
But instead we choose the  route of computing
the \(k\)-total common ancestors of \(b\)-ary trees and then applying
\refT{thm:fixed}.

\begin{lemma}
    \label{lem:B:common}
    Assume \(b \ge 2\).
    Let \(T_n\) be the complete \(b\)-ary tree on \(n = (b^{m+1} - 1)/(b-1)\) nodes.
    Then
    \[
        \Upsilon_{1}(T_n) \sim n \log_{b}n, 
        \qquad
        \Upsilon_{k}(T_n) \sim \frac{b^{k-1}}{b^{k-1}-1} n^{k},
        \qquad
        k \ge 2
        .
    \]
\end{lemma}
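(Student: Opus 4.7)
My plan is to use Lemma \ref{lem:common} to rewrite $\Upsilon_k(T_n) = \sum_v z_v^k$, then exploit the homogeneity of the complete $b$-ary tree: at each depth $d \in \{0,1,\dots,m\}$ there are exactly $b^d$ nodes, and each has subtree size $a_{m,d} = (b^{m-d+1}-1)/(b-1)$. Thus everything reduces to evaluating
\begin{equation}
  \Upsilon_k(T_n) = \sum_{d=0}^{m} b^d \, a_{m,d}^k = \frac{1}{(b-1)^k}\sum_{d=0}^{m} b^d \bigpar{b^{m-d+1}-1}^k.
\end{equation}

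For $k\ge 2$, I would write $a_{m,d}^k = b^{k(m-d+1)}(b-1)^{-k}\bigpar{1 - b^{-(m-d+1)}}^k$, and plug in:
\begin{equation}
  \Upsilon_k(T_n) = \frac{b^{k(m+1)}}{(b-1)^k}\sum_{d=0}^{m} b^{(1-k)d}\bigpar{1 - b^{-(m-d+1)}}^k.
\end{equation}
Since $k\ge 2$, the factor $b^{(1-k)d}$ is geometric with ratio $b^{1-k}<1$, so as $m\to\infty$ the sum converges to $\sum_{d=0}^{\infty} b^{(1-k)d} = 1/(1 - b^{1-k}) = b^{k-1}/(b^{k-1}-1)$; the correction factor $\bigpar{1 - b^{-(m-d+1)}}^k$ tends to $1$ uniformly on the dominant initial range of $d$ and contributes a negligible error (a routine dominated-convergence style bookkeeping, since the tail is already summable). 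Finally, $n = (b^{m+1}-1)/(b-1) \sim b^{m+1}/(b-1)$, whence $n^k \sim b^{k(m+1)}/(b-1)^k$ and the claim $\Upsilon_k(T_n) \sim \frac{b^{k-1}}{b^{k-1}-1}\, n^k$ follows.

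For $k=1$ the geometric sum degenerates, so I compute it exactly:
\begin{equation}
  \Upsilon_1(T_n) = \frac{1}{b-1}\sum_{d=0}^{m}\bigpar{b^{m+1} - b^d} = \frac{(m+1)b^{m+1}}{b-1} - \frac{b^{m+1}-1}{(b-1)^2}.
\end{equation}
The first term dominates; using $n \sim b^{m+1}/(b-1)$ and $m+1 \sim \log_b n$ gives $\Upsilon_1(T_n) \sim n\log_b n$.

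The only genuine subtlety is the $k\ge 2$ asymptotics, and even there the main obstacle is cosmetic: one must verify that the error from truncating the geometric series at $d=m$ (and from the $\bigpar{1 - b^{-(m-d+1)}}^k$ factor) is $o(n^k)$. Since $b^{1-k}<1$ the tail $\sum_{d>m}b^{(1-k)d}$ is exponentially small, and $\bigpar{1 - b^{-(m-d+1)}}^k = 1 + O(b^{-(m-d+1)})$ contributes an additional geometric series which is $O(b^{-m})$ times the main sum; both are absorbed in $o(1)\cdot n^k$. No further ingredients beyond Lemma \ref{lem:common} and elementary geometric sums are needed.
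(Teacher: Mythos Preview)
Your proposal is correct and follows essentially the same approach as the paper: both invoke Lemma~\ref{lem:common} to write $\Upsilon_k(T_n)=\sum_{d=0}^m b^d a_{m,d}^k$, factor out $b^{k(m+1)}/(b-1)^k\sim n^k$, and for $k\ge 2$ recognize the remaining sum as a convergent geometric series in $b^{(1-k)d}$ with limit $b^{k-1}/(b^{k-1}-1)$, while treating $k=1$ separately. Your version is slightly more explicit in the error bookkeeping and gives an exact closed form for $\Upsilon_1$, but the argument is the same.
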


\begin{proof}
The height of \(T_n\) is  \(m \sim \log_b n \).
    It follows from \refL{lem:common} that
    \begin{align*}
        \Upsilon_{1}(T_n) 
        = 
        \sum_{v} z_v
        =
        \sum_{d=0}^{m} b^{d} 
        \times
        a_{m,d}
        =
        \frac{b^{m+1}}{b-1}
        \sum_{d=0}^{m} \left( 1 - \frac{1}{b^{m+1-d}} \right)
        =
        \frac{b^{m+1}}{b-1}
        \left( m + O(1) \right)
        \sim n \log_{b}n
        .
    \end{align*}
    Similarly, for any fixed \(k \ge 2\),
    \[
        \Upsilon_{k}(T_n) 
        = 
        \sum_{v} z_v^{k}
        =
        \sum_{d=0}^{m} b^{d} 
        \times a_{m,d}^{k}
        =
        \frac{b^{(m+1)k}}{(b-1)^{k}}
        \sum_{d=0}^{m} \frac{1}{b^{d(k-1)}} \left(1 - \frac{1}{b^{m+1-d}} \right)^{k}
        \sim
        n^{k} \frac{b^{k-1}}{b^{k-1}-1}
        .
        \qedhere
    \]
\end{proof}

\begin{proof}[Proof of {\refT{thm:complete}}]
    Let \(X_n' = (I(T_n)-\E{I(T_n)})/\sqrt{\Upsilon_2(T_n)}\).
    By \refL{lem:B:common}, for fixed \(k \ge 1\),
    \[
        \frac{\Upsilon_{2k}(T_{n})}{\Upsilon_{2}(T_n)^k}
        \sim
        \frac{n^{2k}\frac{b^{2k-1}}{b^{2k-1}-1}}{\left(n^2\frac{b}{b-1}\right)^{k}}
=
        \frac{b^{2k-1}}{b^{2k-1}-1} \left( \frac{b-1}{b} \right)^{k}.
    \]
    By \refT{thm:fixed}, there exists a unique distribution \(X'\) such that
    \[
        \kk_{2k-1}(X') = 0, 
        \qquad
        \kk_{2k}(X') = 
        \frac{B_{2k}}{2k}
        \frac{b^{2k-1}}{b^{2k-1}-1} \left( \frac{b-1}{b} \right)^{k},
        \qquad
        k \ge 1;
    \]
moreover, \(\E{e^{tX_n'}} \to \E{e^{tX'}}<\infty\) for every $t$.
Recall that,
using \refL{lem:B:common} again,
\[
        X_n \eqd \frac{I(T_n)-\E{I(T_n)}}{n} 
= (1+o(1)) \Bigpar{\frac{b}{b-1}}^{1/2} X_n'.
    \]
    Let \(X'' = \bigpar{b/(b-1)}^{1/2}X'\);
then \(\E{e^{tX_n}} \to \E{e^{tX''}}\) for
    every real $t$ and
    \(X''\) has cumulants
\[
        \kk_{1}(X'') = 0, 
        \qquad
        \kk_{k}(X'') = 
        \frac{B_k}{k}
        \frac{b^{k-1}}{b^{k-1}-1},
        \qquad
        k \ge 2,
    \]
    as in \eqref{kkcomplete}. 
It is not difficult to show that \(X''\) has the same distribution as \(X\)
defined  in \eqref{eq:b:limit} by checking the cumulants of \(X\), using
\refR{RU}.
\end{proof}

\subsection{Balanced \texorpdfstring{\(b\)}{b}-ary trees}

We call a \(b\)-ary tree {\em balanced} if all but the last level of the tree is full and vertices at
the last level take the leftmost positions.  A simple example of a
balanced binary tree is \(T_n\)
in which both the left and right subtrees are complete \(b\)-ary trees but the left subtree has one more level
than the right subtree.  Since the left subtree is of size about \(2n/3\), and the right subtree is of
size about \(n/3\), Theorem \ref{thm:complete} 
and \refL{lem:independence}
imply that
\[
    X_n = \frac{I(T_n)-\E{I(T_n)}}{n}
    \inlaw U + \frac{2 X'}{3} + \frac{X''}{3},
\]
where \(U \sim \Unif[-\frac{1}{2},\frac{1}{2}]\) and \(X', X''\) are independent copies of
\(X\). The three terms in the limit correspond to inversions involving the root, inversions in
the left subtree and inversions in the right subtree.

The above example shows that the limit distribution of \(X_n\) in a balanced \(b\)-ary tree
in which each subtree of the root is complete should be 
\(U\) plus a linear combination of independent copies of \(X\).
We formalize this observation in the following corollary.
\begin{corollary}
    Let \(T_n\) be a balanced \(b\)-ary tree. 
    Let \(X_n\) and \(X\) be as in Theorem \ref{thm:complete}.
    Let \(\{x\} \eqd x-\floor{x}\).
    Assume that
\begin{equation}
        \{\log_{b}\left( (b-1)n \right)\}
        =
        \log_{b} \left( 1+\frac{b-1}{b}i \right)
        +
        o\left( \frac{1}{\log n} \right)
        ,
        \label{eq:almost:complete}
    \end{equation}
    where \(i \in \{0,\dots,b\}\) is a constant.
    We have
    \[
        X_{n} 
        \inlaw 
        U 
        + \sum_{j=1}^{i} \frac{b}{b+i(b-1)} X^{(j)}
        + \sum_{j=i+1}^{b} \frac{1}{b+i(b-1)} X^{(j)}
        \eqd X(b,i)
        ,
    \]
    where \(U \sim \Unif[-\frac{1}{2},\frac{1}{2}]\), \(X^{(j)}\sim X\) are all independent.
    Moreover \(\E{e^{tX_{n}}} \to \E{e^{tX(b,i)}}\) for all \(t \in \R\).
\end{corollary}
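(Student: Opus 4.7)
The plan is to apply \refL{lem:independence} at the root of $T_n$ to split $I(T_n)$ into contributions from the root and from each of the $b$ subtrees, and then invoke \refT{thm:complete} on each subtree. Writing $\rho$ for the root, $T_n^{(1)},\ldots,T_n^{(b)}$ for the subtrees of $\rho$, and $n_j := |T_n^{(j)}|$, \refL{lem:independence} gives
\begin{equation*}
I(T_n) \eqdd Z_\rho + \sum_{j=1}^b I(T_n^{(j)}),
\end{equation*}
with $Z_\rho\sim\Unif\{0,\dots,n-1\}$ independent of the subtree contributions, which are themselves mutually independent. Centering and dividing by $n$ yields an analogous decomposition of $X_n$ into a root term and $b$ subtree terms.

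I would then identify the factor limits. The root term $(Z_\rho - (n-1)/2)/n$ converges to $U \sim \Unif[-\tfrac12,\tfrac12]$ in distribution and in MGF on all of $\R$ (a standard discrete-to-continuous uniform rescaling, verified directly from \refL{lem:Z}). For the subtrees, a short calculation from \eqref{eq:almost:complete} shows that along the natural sequence $n = [b^{h-1}(b + i(b-1))-1]/(b-1)$ with $h = \lfloor\log_b((b-1)n)\rfloor$, the deepest level of $T_n$ contains exactly $i b^{h-1}$ nodes, so that the first $i$ subtrees of $\rho$ are complete $b$-ary trees of height $h-1$ and the remaining $b-i$ are complete of height $h-2$. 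The size ratios satisfy $n_j/n \to \alpha_j$, where $\alpha_j = b/(b+i(b-1))$ for $j\le i$ and $\alpha_j = 1/(b+i(b-1))$ for $j>i$. Now \refT{thm:complete} applied to each $T_n^{(j)}$ gives $(I(T_n^{(j)}) - \E{I(T_n^{(j)})})/n_j \dto X^{(j)}$ in distribution and in MGF, where the $X^{(j)}$ are independent copies of $X$.

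To combine, the independence from \refL{lem:independence} factors the MGF,
\begin{equation*}
\E{e^{tX_n}} = \E{e^{t(Z_\rho - (n-1)/2)/n}} \prod_{j=1}^b \E{e^{(tn_j/n)(I(T_n^{(j)}) - \E{I(T_n^{(j)})})/n_j}},
\end{equation*}
and each factor converges to its target; for the subtree factors the drifting parameter $tn_j/n \to t\alpha_j$ is handled by combining distributional convergence with the uniform MGF bound \eqref{mgfIT*}, which grants equicontinuity in the parameter. This yields $\E{e^{tX_n}} \to \E{e^{tX(b,i)}}$ for every $t\in\R$, hence $X_n \dto X(b,i)$ as well.

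The main obstacle I anticipate is handling the $o(1/\log n)$ slack in \eqref{eq:almost:complete}. Off the natural subsequence above, $n$ may deviate from the exact special value by up to $o(n/\log n)$, in which case the $(i+1)$-th subtree is not exactly complete but carries $r = o(n/\log n)$ extra leaves at its deepest level. \refT{thm:complete} does not apply verbatim to such a ``nearly complete'' subtree; however, a direct $\Upsilon_k$-estimate in the spirit of \refL{lem:B:common} should show that the rescaled cumulants of such a subtree differ from those of a complete tree of comparable size by $o(1)$, so that \refT{thm:fixed} still delivers the same limit $X^{(j)}$. This perturbative estimate is the only real technical subtlety; everything else is a straightforward recursion-at-the-root calculation.
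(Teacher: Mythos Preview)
Your proposal is correct and follows essentially the same route as the paper, which does not give a formal proof but only the outline via the preceding example (splitting at the root via \refL{lem:independence} and applying \refT{thm:complete} to each subtree) together with the remark interpreting \eqref{eq:almost:complete}. Your identification of the main subtlety---the possibly non-complete exceptional subtree carrying $o(n/\log n)$ extra leaves---and your proposed fix via a perturbative $\Upsilon_k$ estimate in the style of \refL{lem:B:common} is exactly what the paper's remark implicitly appeals to.
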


\begin{remark}
Condition \eqref{eq:almost:complete} is equivalent of saying that all the
\(b\) subtrees of the root of \(T_n\) except one (either the \(i\)-th or the \((i+1)\)-th) are
complete \(b\)-ary trees and the exceptional subtree differs from a complete \(b\)-ary tree in
size by at most \(o(n/\log(n))\).
\end{remark}

\section{A sequence of split trees}\label{sec:split}

We will now define split trees introduced by Devroye \cite{MR1634354}.
The random split tree $T_n$ has parameters $b, s, s_0, s_1, \cV$ and $n$. The integers $b, s, s_0, s_1$ are required to satisfy the inequalities
\begin{equation}
    2 \le b, \quad 0 < s, \quad 0\leq s_0\leq s, \quad 0\leq bs_1\leq s+1-s_0.
    \label{eq:split:para}
\end{equation}
and $\cV=(V_1,\dots,V_b)$ is a random non-negative vector with 
\(\sum_{i=1}^b V_i = 1\).
We define $T_n$ algorithmically. 
Consider the infinite $b$-ary tree $\cU$, and view each node as a
bucket with capacity $s$. Each node $u$ is assigned an independent copy 
$\cV_u$ of the random split vector $\cV$.
Let
$C(u)$ denote the number of balls in node $u$, initially setting $C(u) = 0$ for all $u$. Say that
$u$ is a {\em leaf} if $C(u) > 0$ and $C(v) = 0$ for all children $v$ of
$u$, and {\em internal} if 
$C(v) > 0$ for some proper descendant $v$, i.e., \(v < u\). We add $n$ balls labeled $\{1,\dots,n\}$ to $\cU$
one by one. The $j$-th ball is added by the following ``trickle-down'' procedure.
\begin{enumerate}
\item Add $j$ to the root.
\item While $j$ is at an internal node $u$, choose child $i$ with probability $V_{u,i}$, where
    $(V_{u,1}, \dots,V_{u,b})$ is the split vector at $u$, and move $j$ to child $i$.
\item If $j$ is at a leaf $u$ with $C(u) < s$, then $j$ stays at $u$ and we set $C(u) \leftarrow C(u) + 1$.

If $j$ is at a leaf with $C(u) = s$, then the balls at $u$ are distributed among $u$ and its
children as follows. We select $s_0\leq s$ of the balls uniformly at random to stay at $u$. Among
the remaining $s+1-s_0$ balls, we uniformly at random distribute $s_1$ balls to each of the $b$
children of $u$. Each of the remaining $s+1-s_0-bs_1$ balls is placed at a child node 
chosen independently at random according to the split vector assigned to \(u\).
This splitting process is repeated for any child which receives more than $s$ balls.
\end{enumerate}
For example, if we let \(b=2, s=s_0=1, s_{1}=0\) and \(\cV\) have the
distribution of \( (U, 1-U)\) where \(U \sim \Unif[0,1]\), then
we get the well-known binary search tree.

Once all $n$ balls have been placed in $\cU$, we obtain $T_n$ by deleting
all nodes $u$ such that the subtree rooted at $u$ contains no balls. Note
that 
 an internal node of $T_n$ contains exactly $s_0$ balls, while a leaf
contains a random amount in $\{1,\dots,s\}$. We assume, as previous authors,
that $\Prob{\exists i : V_i = 1} < 1$. 
 We can assume \(\cV\) has a symmetric (permutation invariant) distribution
without loss of generality, since a uniform random permutation 
of subtree order does not change the number of inversions. 

An equivalent definition of split trees is as follows.  Consider an infinite \(b\)-ary tree \(\cU\).
The split tree \(T_n\) is constructed by distributing \(n\) balls (pieces of information) among
nodes of \(\cU\). For a node \(u\), let \(n_u\) be the number of balls stored in the subtree rooted
at \(u\). Once \(n_u\) are all decided, we take \(T_n\) to be the largest subtree of \(\cU\) such
that \(n_u > 0\) for all \(u \in T_n\).
Let the split vector \(\cV \in [0,1]^b\) be as before. 
Let \(\cV_u = (V_{u,1},\dots,V_{u,b})\) be the independent copy of \(\cV\) assigned to \(u\). Let
\(u_1,\dots,u_b\) be the child nodes of \(u\). Conditioning on \(n_u\) and
\(\cV_u\),
 if $n_u\le s$, then $n_{u_i}=0$ for all $i$;
if $n_u>s$, then
\[
    (n_{u_1}, \dots, n_{u_{b}}) 
    \sim 
    \mathrm{Mult}(n-s_0-bs_{1}, V_{u,1}, \dots, V_{u, b}) 
    +
    (s_{1}, s_{1},\dots, s_{1}),
\]
where \(\mathrm{Mult}\) denotes multinomial distribution, and \(b, s, s_{0}, s_{1}\) are integers
satisfying \eqref{eq:split:para}. Note that \(\sum_{i=1}^b n_{u_i} \le n\) (hence the ``splitting'').
Naturally for the root \(\rho\), \(n_{\rho}=n\). Thus the distribution of \((n_{u}, \cV_{u})_{u \in V(\cU)}\) is completely defined.

\subsection{Outline}\label{sec:splitoutline}

In this section we outline how one can apply the \emph{contraction method} to prove \refT{thm:split} but leave
the detailed proof to \refS{sec:Neininger} and \refS{sec:split:mgf}. In Section \ref{sec:splitnodes} we state and
outline the proof of the corresponding theorem for inversions on nodes under
stronger assumptions.

Recall that in \eqref{eq:ballxyz}, we define
\begin{equation}
\bX_n = \frac{\bI(T_n) - \E{\bI(T_n)}}{n}, 
\quad \bY_n = \frac{\bI(T_n) - s_0\bU(T_n)/2}{n}, 
\quad \bW_n = \frac{\bU(T_n) - \E{\bU(T_n)}}{n}.
\end{equation}
Let \(\nBar = (n_{1},\dots,n_{b})\) denote the vector of the (random) number of balls in each of the
\(b\) subtrees of the root.  \citet{MR3025680} showed that, conditioning on \(\nBar\), 
\begin{equation}
\bW_n 
\eqdd 
\sum_{i=1}^b \frac{n_i}{n} \bW_{n_i} 
+ 
\frac{n-s_0}{n} 
+ 
\bTollN(\nBar), 
\quad 
\bTollN(\nBar) 
\eqd 
- \frac{\E{\bU(T_n)}}{n} +
\sum_{i=1}^b \frac{\E{\bU(T_{n_i})}}{n}
.
\label{eq:n:tollfunction:ball:W}
\end{equation}

We derive similar recursions for \(\bX_{n}\) and \(\bY_{n}\).
Conditioning on \(\nBar\), \(\bI(T_{n})\) satisfies the recursion
\begin{equation}\label{eq:ballrecursion}
\bI(T_n) \eqdd \bZ_\r + \sum_{i=1}^b \bI(T_{n_i}),
\end{equation}
where $\bZ_\r$ denotes the number of inversions involving balls contained in the root $\r$. 
Therefore, still conditioning on \(\nBar\), we have
\begin{align}
\bX_n 
&
\eqdd 
\sum_{i=1}^b \frac{n_i}{n} \bX_{n_i} 
+ 
\frac{\bZ_{\rho}}{n} 
-
\frac{\E{\bI(T_n)}}{n}
+ 
\sum_{i=1}^b \frac{\E{\bI(T_{n_{i}})}}{n}
\\
&
=
\sum_{i=1}^b \frac{n_i}{n} \bX_{n_i} 
+ 
\frac{\bZ_{\rho}}{n} 
- 
\frac{s_{0}}{2} \frac{\E{\bU(T_n)}}{n}
+ 
\frac{s_{0}}{2}\sum_{i=1}^b \frac{\E{\bU(T_{n_{i}})}}{n}
\\
&
=
\sum_{i=1}^b \frac{n_i}{n} \bX_{n_i} 
+ 
\frac{\bZ_{\rho}}{n} 
+ 
\frac{s_0}{2}\bTollN(\nBar)
,
\label{eq:n:tollfunction:ball:X}
\end{align}
where we use that
\begin{equation}
    \E{ \bI(T_n) \mid T_n}
    =
    \frac{s_{0}}{2}
    \bU(T_n)
    .
    \label{eq:inv:tp}
\end{equation}
(See the proof of \refL{lem:toll:func}.)
It follows also from \eqref{eq:inv:tp} that \(\bX_{n} = \bY_{n} + \frac{s_0}{2} \bW_{n}\) and
\begin{equation}
\bY_n 
\eqdd 
\sum_{i=1}^b \frac{n_i}{n} \bY_{n_i} 
+ 
\frac{\bZ_{\rho}}{n} 
-
\frac{s_0}{2} \frac{n-s_0}{n}
.
\label{eq:n:tollfunction:ball:Y}
\end{equation}

In Lemma \ref{lem:root:unif} below, we show that 
\begin{equation}\label{eq:balluniform}
    \frac{\bZ_\r}{n} \inLII U_1 + \dots + U_{s_0},
\end{equation}
where $U_1,\dots,U_{s_0}$ are independent and uniformly distributed in $[0, 1]$.
\citet{MR3025680} have shown that $\bTollN(\nBar) \inas \Toll(\cV)$, where
\begin{equation}
\m = -\sum_{i=1}^b \E{V_i\ln V_i}, \qquad \text{and} \qquad \Toll(\cV) = \frac{1}{\m} \sum_{i=1}^b V_i\ln V_i.
    \label{eq:tollfunction:ball}
\end{equation}
Together with $(n_1/n,\dots,n_b/n)\inas (V_1,\dots,V_b)$ (by the law of large number),
we arrive at the following fixed-point equations (already presented in \refT{thm:split})
\begin{equation}
    \left[
    \begin{aligned}
        & \bX \\
        & \bY \\
        & \bW
    \end{aligned}
    \right]
    \eql
    \left[
    \begin{aligned}
        & \sum_{i=1}^b V_i \bX^{(i)} + \sum_{j=1}^{s_0} U_j + \frac{s_0}{2} \Toll(\cV)  \\
        & \sum_{i=1}^b V_i \bY^{(i)} + \sum_{j=1}^{s_0} (U_j-1/2) \\
        & \sum_{i=1}^b V_i \bW^{(i)} + 1 + \Toll(\cV)
    \end{aligned}
    \right]
    .
    \label{eq:ballfix}
\end{equation}

For a random vector \(X \in \R^{d}\), let \(\norm{X}\) be the Euclidean norm of \(X\).  Let
\(\norm{X}_{2} \eqd \sqrt{\E{\norm{X}^2}}\).  Recall that \(\mZeroD\) denotes the set of
probability measures on \(\R^{d}\) with zero mean and finite second moment.  The Mallows metric on
\(\mZeroD\) is defined by
\[
\dM(\nu, \lambda) = \inf \left\{ \norm{X - Y}_{2} : X \sim \lambda, Y \sim \nu \right\}
.
\]
Using the {contraction method}, \citet{MR3025680} proved that $\bW_n \dMto \bW$, the unique solution
of the first equation of \eqref{eq:ballfix} in \(\mZeroI\).

We can apply the same contraction method to show that the vector $\bVecN \dMto \bVec$, the unique
solution of \eqref{eq:ballfix} in \(\mZeroIII\).  But we only outline the argument here since we
will actually use a result by \citet{MR1871564} which gives us a shortcut.
Assume that the independent vectors \(\bVecIx\), \(i=1,\dots,b\) share some common distribution \(\mu \in \mZeroIII\).  Let \(F(\mu) \in \mZeroIII\) be the
distribution of the random vector given by the right hand side of \eqref{eq:ballfix}.  Using a
coupling argument, we can show that for all \(\nu, \lambda \in \mZeroIII\), 
\[
    \dM(F(\nu), F(\lambda)) < c \dM(\nu, \lambda),
\]
where \(c \in (0,1)\) is a constant. Thus \(F\) is a
contraction and by Banach's fixed point theorem, \eqref{eq:ballfix} must have a unique solution
\(\bVec \in \mZeroIII\).
Finally, we can use a similar coupling argument to show that \(\bVecN \dMto \bVec\).

\subsection{Convergence in the Mallows metric}\label{sec:Neininger}

\begin{lemma}
    \label{lem:ball:Mallows}
    Let \(\bVecN\) and \(\bVec\) be as in \refT{thm:split}. Then
    \[
        \dM\left(\bVecN, \bVec \right)
        \to
        0.
    \]
\end{lemma}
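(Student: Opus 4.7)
The plan is to apply the multivariate contraction theorem of \citet{MR1871564} (the ``shortcut'' alluded to in Section \ref{sec:splitoutline}) to the three-dimensional recursive system assembled from \eqref{eq:n:tollfunction:ball:W}, \eqref{eq:n:tollfunction:ball:X}, and \eqref{eq:n:tollfunction:ball:Y}. Bundling these into one vector-valued recursion yields
\[
\bVecN \eqdd \sum_{i=1}^{b} A_i^{(n)} \bigl(\bX_{n_i}^{(i)}, \bY_{n_i}^{(i)}, \bW_{n_i}^{(i)}\bigr)^T + b^{(n)}, \qquad A_i^{(n)} = \frac{n_i}{n} I_3,
\]
where $\bigl(\bX_{n_i}^{(i)}, \bY_{n_i}^{(i)}, \bW_{n_i}^{(i)}\bigr)$ are, conditionally on $\nBar$, independent copies of $\bigl(\bX_{n_i}, \bY_{n_i}, \bW_{n_i}\bigr)$, and $b^{(n)}$ collects the toll terms
\[
\Bigl(\bZ_\rho/n + (s_0/2)\bTollN(\nBar),\ \bZ_\rho/n - (s_0/2)(n-s_0)/n,\ (n-s_0)/n + \bTollN(\nBar)\Bigr)^T.
\]
This is exactly the input format of Neininger's theorem, and the claimed limit $\bVec$ is the stated fixed point of the limiting equation \eqref{eq:Y}.

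Next, I would verify the two main hypotheses: joint $L^2$-convergence $(A_i^{(n)}, b^{(n)}) \to (A_i, b)$ with $A_i = V_i I_3$ and $b$ the additive vector on the right-hand side of \eqref{eq:Y}, together with the strict contraction condition $\sum_i \E{\norm{A_i}_{op}^2} < 1$. The building-block convergences are already in hand: $n_i/n \to V_i$ by the law of large numbers (and in $L^2$ since $n_i/n \le 1$), $\bTollN(\nBar) \to \Toll(\cV)$ almost surely by \cite{MR3025680}, and $\bZ_\rho/n \to \sum_{j=1}^{s_0} U_j$ in $L^2$ by the lemma cited as \eqref{eq:balluniform}. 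These combine into joint $L^2$-convergence of $(A_i^{(n)}, b^{(n)})$ once uniform integrability of $\bTollN(\nBar)^2$ is established. For the contraction condition, $\norm{A_i}_{op} = V_i$, so $\sum_i \E{V_i^2} < \E{\bigl(\sum_i V_i\bigr)^2} = 1$, with strict inequality precisely because $\Prob{\exists i: V_i = 1} < 1$, as assumed in \refT{thm:split}. A Neininger-style ``small subtree'' condition also has to be checked, but this is routine here since $n_i/n \le 1$ and $\bVec_{n_i}$ is well-defined for every $n_i$ (with the convention $\bVec_0 \eqd 0$).

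The main obstacle is upgrading the almost-sure convergence $\bTollN(\nBar) \to \Toll(\cV)$ to $L^2$-convergence uniform in $n$; this requires a moment bound of order at least $2+\epsilon$ on $\bTollN(\nBar)$, which can be extracted by revisiting the concentration estimates in \cite{MR3025680} with a little extra care (or by exploiting the explicit logarithmic dependence of $\bTollN(\nBar)$ on the multinomial vector $\nBar$). The analogous $L^2$ control for $\bZ_\rho/n$ comes for free from Lemma \ref{lem:root:unif}. Once these uniform $L^2$ bounds are in place, Neininger's theorem simultaneously delivers the unique fixed point $\bVec \in \mZeroIII$ of \eqref{eq:Y} and the convergence $\dM\bigl(\bVecN, \bVec\bigr) \to 0$.
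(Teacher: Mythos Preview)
Your proposal is correct and follows essentially the same route as the paper: set up the three-dimensional recursion, invoke Neininger's multivariate contraction theorem, and verify the contraction condition $\sum_i \E{V_i^2}<1$, the small-subtree condition, and the $L^2$-convergence of the coefficients and additive term.

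The one place where you overcomplicate matters is the $L^2$ upgrade for $\bTollN(\nBar)$. You flag this as the main obstacle and propose extracting a $2+\varepsilon$ moment bound from the concentration estimates in \cite{MR3025680}. The paper's argument is much simpler: it observes that $\bTollN(\nBar)$ is \emph{deterministically} bounded, uniformly in $n$. Writing out $\bTollN(\nBar)$ via the expansion \eqref{eq:firstmoment:ball} and simplifying, one sees that up to a bounded periodic piece and a deterministic $o(1)$, the toll is $\mu^{-1}\sum_i (n_i/n)\ln(n_i/n)$, which lies in $[-\mu^{-1}\ln b,\,0]$ by the elementary entropy inequality $\sum x_i\ln x_i \ge -\ln b$ for probability vectors. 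Hence almost-sure convergence plus dominated convergence gives $L^2$-convergence immediately, with no need for higher-moment or concentration input.
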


We will apply Theorem 4.1 of \citet{MR1871564}, which summarizes sufficient conditions for the
contraction method outlined in the previous section to work.  Since the statement of the theorem is
rather lengthy, we do not repeat it here and refer the readers to the original paper.

\citeauthor{MR1871564}'s theorem implies that \(\bVecN \dMto \bVec\) if the following three
conditions are satisfied:
\begin{align}
    & \left(\frac{\bZ_{\rho}}{n}, \frac{n_1}{n},\dots, \frac{n_{b}}{n}, \bTollN(n) \right) \dMto
    \left(\sum_{j=1}^{s_0} U_j, V_1,\dots,V_{b}, \Toll(\cV) \right), 
    \qquad n \to \infty, 
    \label{cond:toll}
    \\
    & \sum_{i=1}^{b} \E{V_i^2} < 1, 
    \label{cond:V2}
    \\
    & \E{\ind_{[n_i \le \ell] \cup [n_i = n]} \left( \frac{n_{i}}{n} \right)^2} \to 0, \qquad n \to \infty,
    \label{cond:VZero}
\end{align}
for all \(\ell \ge 1\) and \(i = 1,\dots,b\).
(The three conditions correspond to (11), (12) and (13) in \cite{MR1871564}.)

Condition \eqref{cond:V2} is satisfied by the assumption that $\Prob{\exists i : V_i = 1} < 1$.
Since we assume that \(s_0 > 0\), the event \(n_i = n\) cannot happen. So the expectation in
\eqref{cond:VZero} is at most \( (\ell/n)^2 \to 0\) and this condition is also satisfied.
The last condition \eqref{cond:toll} follows from the following two lemmas.

\begin{lemma}
    \label{lem:toll:func}
    We have $\bTollN(\nBar) \inLII \Toll(\cV)$ and \(\sup_{n \ge 1} \bTollN(\nBar)\) is bounded deterministically.
\end{lemma}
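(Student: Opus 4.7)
The plan is two-fold: first, derive a uniform deterministic bound on $\bTollN(\nBar)$ by invoking the known asymptotics of $\E{\bU(T_n)}$, and second, upgrade the already-established a.s.\ convergence to $L^2$ convergence via bounded convergence.

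For the deterministic bound, I would recall Devroye's analysis of the expected total path length in split trees (as used in \cite{MR1634354,MR3025680}), which yields
\[
\E{\bU(T_n)} = \m^{-1} n \ln n + n R(n),
\]
where $R:\N\to\R$ is a bounded function (setting $R(0)=0$ and using the convention $0\ln 0 = 0$). For $n > s$ the root of $T_n$ is internal, so $\sum_{i=1}^{b} n_i = n - s_0$; for $n \le s$ one has $\bTollN(\nBar) = 0$ trivially. Substituting the expansion into the definition of $\bTollN$ and rearranging yields
\[
\bTollN(\nBar) = \m^{-1}\sum_{i=1}^{b} \frac{n_i}{n}\ln\frac{n_i}{n} - \frac{\m^{-1} s_0 \ln n}{n} + \sum_{i=1}^{b} \frac{n_i}{n} R(n_i) - R(n).
\]

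Next, I bound each piece deterministically: since $\sum_i n_i/n \le 1$ and $x\ln x \le 0$ on $[0,1]$, the entropy-type sum lies in $[-\ln b, 0]$; the term $(\ln n)/n$ is bounded (in fact $o(1)$); and $|\sum_i (n_i/n) R(n_i) - R(n)| \le 2\sup_m |R(m)|$. Hence there is a constant $M$, depending only on the parameters of the split tree, such that $|\bTollN(\nBar)| \le M$ uniformly in $n$ and in the random $\nBar$, which gives the deterministic boundedness claim. Finally, \citet{MR3025680} already established $\bTollN(\nBar) \inas \Toll(\cV)$, and since $\Toll(\cV) \in [-\m^{-1}\ln b, 0]$ is likewise bounded, the bounded convergence theorem yields $\bTollN(\nBar) \inLII \Toll(\cV)$.

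The main step to check carefully is the uniform boundedness of $R$: small $n$ is immediate from $\bU(T_n) = 0$ for $n \le s$, whereas for large $n$ one uses Devroye's $O(n)$ error term with an absolute constant, which is exactly the form employed in \cite{MR3025680}. Once this is in place, the rest of the argument is essentially algebraic manipulation plus dominated convergence.
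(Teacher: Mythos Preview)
Your proposal is correct and follows essentially the same route as the paper: write $\E{\bU(T_n)}=\mu^{-1}n\ln n + nR(n)$ with $R$ bounded, bound each piece of the resulting expression for $\bTollN(\nBar)$, and upgrade the a.s.\ convergence (which the paper re-derives for completeness, exploiting the periodicity of $\varpi$, rather than merely citing \cite{MR3025680}) to $L^2$ via dominated convergence. One tiny slip: your lower bound $-\ln b$ on the entropy-type sum presumes $\sum_i n_i/n = 1$, whereas in fact $\sum_i n_i = n - s_0 < n$; simply use $|x\ln x|\le 1/e$ on $[0,1]$ to get the harmless bound $b/e$ instead.
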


\begin{proof}
We first derive an expression for the expected number of
inversions. Any internal node contains $s_0$ balls, so any ball at height $h$ has $s_0 \times h$ ancestral
balls. Let \(B(T_n)\) be the set of balls in \(T_n\).
Conditioning on \(T_n\), we have
$$
\E{\bI(T_n)\,|\,T_n} 
= \E{\left. \sum_{\b\in B(T_n)} |\{\b' : \b' < \b, \l(\b') > \l(\b)\}| \, \right| \, T_n} 
= \sum_{\b \in B(T_n)} \frac{s_0}{2} h(\b) 
= \frac{s_0}{2} \bU(T_n).
$$
Thus by \citet[Theorem 3.1]{MR3025680},
\begin{equation}\label{eq:firstmoment:ball}
\E{\bI(T_n)}
=
\frac{s_0}{2}
\E{\bU(T_n)} 
= 
\frac{s_0}{2}
\left[
\frac1\m n\ln n + n\varpi(\ln n) + o(n)
\right]
,
\end{equation}
with $\m$ as in \eqref{eq:tollfunction:ball}, where $\varpi$ is a continuous function of period $d = \sup\{a
\geq 0 : \Prob{\ln V_1 \in a\Z} = 1\}$. In particular, $\varpi$ is constant if $\ln V_1$ is {\em non-lattice}, meaning that \(d = 0\).

The convergence of the toll function can now be deduced from the same result on the total path length
from \cite{MR3025680}, but we include the short argument for completeness. Conditioning on the split
vector of the root $(V_1,\dots,V_n)$ and noting that $(n_1/n,\dots,n_b/n) \inas (V_1,\dots,V_b)$, we
have from \eqref{eq:n:tollfunction:ball:W}, \eqref{eq:firstmoment:ball},
\begin{align}
    \bTollN(\nBar) 
    & = - \frac{1}{\m} \ln n - \varpi(\ln n) + \sum_{i=1}^b \left( \frac{1}{\m} \frac{n_i\ln n_i}{n} + \frac{n_i}{n} \varpi(\ln n_i) \right) + o(1) \\
    & = \left(\sum_{i=1}^b \frac{1}{\m} \frac{n_i}{n}\ln\frac{n_i}{n} \right) + \left(\sum_{i=1}^b \frac{n_i}{n} \varpi\left( \ln V_i + \ln n \right)\right) -
        \varpi(\ln n)
        + o(1) \\
    & = \frac{1}{\m} \sum_{i=1}^b V_i\ln V_i + o(1),
\end{align}
where we use that \(\varpi\) is continuous and has the same period as \(\ln V_i\).
So we have
$$
\bTollN(\nBar) \inas \Toll(\cV) \eqd \frac{1}{\m} \sum_{i=1}^b V_i\ln V_i,
$$
without conditioning on $(V_1,\dots,V_b)$. Note that since for \(x_1,\dots,x_b \ge 0\) with
    \(\sum_{i=1}^b x_i = 1\), we have
\(
    \sum_{i=1}^b x_i \ln(x_i) \ge - \ln b
\) \cite[Theorem 3.1]{conrad2017},
both \(\bTollN(\nBar)\) and \(\Toll(\cV)\) are bounded deterministically.
Thus \(\bTollN(\nBar) \inLII \Toll(\cV)\) by the dominated convergence
theorem.
\end{proof}

\begin{lemma}
    \label{lem:root:unif}
    For \(i = 1,\dots,s_0\), let \(U_i\) be a \(\Unif[0,1]\) random variable independent of
    all other random variables.  Then there exists a coupling such that \({\bZ{}_{\rho}}/{n} \inLII
    \sum_{i=1}^{s_0} U_{i}\).
\end{lemma}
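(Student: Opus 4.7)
The plan is to derive an explicit formula for $\bZ_\rho$ as a simple function of the labels on the $s_0$ balls at the root, and then couple the uniform labeling with $n$ i.i.d.\ $\Unif[0,1]$ variables through ranks so that the $L^2$-convergence reduces to a one-line binomial variance computation.

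First I would observe that once $n$ is large enough that the root is an internal node, it contains exactly $s_0$ balls, so $\bZ_\rho$ depends only on the unordered set of labels $L_1,\dots,L_{s_0}$ present at the root. For each root ball with label $L_i$, the number of \emph{descendant} balls with strictly smaller label is $L_i - 1 - \#\{j \neq i : L_j < L_i\}$, and summing over $i$ collapses the double sum to $\binom{s_0}{2}$, giving the closed form
\[
\bZ_\rho \;=\; \sum_{i=1}^{s_0} L_i \;-\; \binom{s_0+1}{2}.
\]
In particular $\bZ_\rho/n$ agrees with $n^{-1}\sum_i L_i$ up to a deterministic $O(1/n)$ error that is harmless in $L^2$.

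For the coupling I would take $V_1,\dots,V_n$ i.i.d.\ $\Unif[0,1]$, independent of $T_n$, and define $\l(\b_j) := \#\{k : V_k \le V_j\}$; this is manifestly a uniformly random bijection onto $[n]$ and is independent of $T_n$, as required by the model. Enumerate the balls so that $\b_1,\dots,\b_{s_0}$ sit at the root and set $U_i := V_i$, which are then independent $\Unif[0,1]$ variables by construction. Conditional on $V_i$ we have $L_i - 1 \sim \mathrm{Bin}(n-1, V_i)$, so a variance-plus-bias split yields $\E{(L_i/n - U_i)^2} = O(1/n)$. Combining this with the closed form above and the triangle inequality in $L^2$ gives $\bZ_\rho/n \inLII \sum_{i=1}^{s_0} U_i$. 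I expect the only genuine subtlety to lie in setting up the coupling so that the $U_i$ are simultaneously independent of $T_n$ and drive the same ranks $L_i$ appearing in $\bZ_\rho$; the counting identity and the binomial estimate are then routine.
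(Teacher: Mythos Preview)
Your argument is correct and actually streamlines the paper's proof.  Both proofs start from the same closed form $\bZ_\rho=\sum_i L_i-\binom{s_0+1}{2}$ (the paper writes it as $\sum_i(\l_i-i)$ with the labels ordered).  The difference is in the coupling.  The paper discretizes the $U_i$'s via $\l_i'=\lceil nU_i\rceil$ to get an i.i.d.\ sample \emph{with} replacement from $\{1,\dots,n\}$, then couples this to the true labels (sampled \emph{without} replacement) by declaring them equal whenever the $\l_i'$ are distinct; the $L^2$ error is controlled by the birthday collision probability $O(1/n)$, and $|\l_i'/n-U_i|\le 1/n$ finishes.  You instead generate the labeling as ranks of $n$ i.i.d.\ uniforms and read off $U_i$ as the uniform attached to the $i$-th root ball; this avoids the with/without-replacement coupling and the birthday estimate, replacing them with a single binomial variance computation $\E{(L_i/n-U_i)^2}=O(1/n)$.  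Your route is a bit cleaner; the paper's route is perhaps more transparent about where the error comes from.  The subtlety you flag --- that the $U_i$'s remain i.i.d.\ $\Unif[0,1]$ and independent of $T_n$ even though which balls sit at the root is random --- is handled exactly as you indicate: the root--ball indices are measurable with respect to $T_n$, while $(V_1,\dots,V_n)$ is an exchangeable family independent of $T_n$, so the selected $s_0$-tuple inherits the i.i.d.\ uniform law and the required independence.
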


\begin{proof}
We have $\bZ_\r = \sum_{i=1}^{s_0} (\l_i - i)$, where $\l_1 < \l_2 < \dots < \l_{s_0}$ are the
labels for the balls in the root, chosen uniformly at random from $[n]$ without replacement. Indeed,
the ball with label $\l_i$ forms an inversion with the balls with labels $\{\l : \l < \l_i, \l \neq
\l_j \ \forall j<i\}$, a set of size $\l_i-i$.

Let \(\lambda_{i}' = \ceil{n U_{i}}\) for \(i=1,\dots,s_0\).
Then $\l_1', \dots,\l_{s_0}'$ are chosen independently and uniformly at random from $\{1,\dots,
n\}$. Define $\bZ_\r' = \sum_{i=1}^{s_0} (\l_i' - i)$. We couple $\bZ_\r'$ to $\bZ_\r$ by setting
$\bZ_\r = \bZ_\r'$ whenever all $\l_i'$ are distinct, and otherwise setting $\bZ_\r =
\sum_{i=1}^{s_0}(\l_i - i)$ for some distinct $\{\l_1,\dots,\l_{s_0}\}$ chosen uniformly at random. The
probability that $\l_i' = \l_j'$ for some $i\neq j$ is $O(1/n)$. (See the famous birthday problem
\cite[Example 3.2.5]{d10}.) Since $\bZ_\r \leq s_0n$ and
$\bZ_\r' \leq s_0n$, 
$$
\E{\left(\frac{\bZ_\r}{n} - \frac{\bZ_\r'}{n}\right)^2} 
\le \Prob{\exists i\neq j : \l_i' = \l_j'} \frac{4s_0^2n^2}{n^2} = O\bfrac1n.
$$
As  $|\l_i'/n - U_{i}| \le 1/n$, it is clear that
$\bZ_\r'/n = \sum_{i=1}^{s_0} (\l_i' - i)/n$ converges in the second moment to $\sum_{j=1}^{s_0} U_j$.
By the triangle inequality, this is also true for $\bZ_\r/n$.
\end{proof}

Since \( (n_1/n, \dots, n_b/n) \inas (V_1,\dots,V_b)\) and \(n_i/n \le 1\) for all \(i =
1,\dots,b\), the convergence is also in \(L^2\). This together with Lemma
\ref{lem:toll:func} 
and \ref{lem:root:unif} implies \eqref{cond:toll}. Therefore, it follows from Theorem 4.1 of
\citet{MR1871564} that \( \bVecN \dMto \bVec\).

\subsection{Convergence in moment generating function}
\label{sec:split:mgf}

To finish the proof of \refT{thm:split}, it remains to show following lemma.

\begin{lemma}\label{lem:mgflem}
    There exists a constant \(L \in (0, \infty]\) such that for all fixed $t \in \R^3$ with
    \(\norm{t} < L\),
\begin{equation}
\label{eq:mgflem}      
    \E{\exp\left(t \cdot \bVecN \right)} 
    \to 
    \E{\exp\left(t \cdot \bVec \right)} < \infty,
\end{equation}
    where \(\cdot\) denotes the inner product.
    If we further assume that \(\Prob{\exists i:V_i=1}=0\), then \(L = \infty\).
\end{lemma}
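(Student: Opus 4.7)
The plan is to bootstrap from the $d_2$-convergence established in \refL{lem:ball:Mallows}. Since $d_2$-convergence already entails convergence in distribution, to establish \eqref{eq:mgflem} it suffices to exhibit a uniform bound $\sup_n \E{\exp(t\cdot \bVecN)} < \infty$ on a neighborhood of the origin slightly larger than $\|t\|<L$; combined with convergence in distribution, this yields both $\E{\exp(t\cdot\bVec)}<\infty$ and pointwise convergence of the moment generating functions via uniform integrability.

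First I would reduce the three-dimensional MGF to a two-dimensional one using the deterministic identity $\bX_n=\bY_n+(s_0/2)\bW_n$, so the task becomes to uniformly bound $\E{\exp(a\bY_n+b\bW_n)}$ for $(a,b)$ near the origin. Conditioning on $T_n$, the variable $\bY_n$ has conditional mean zero (since $\E{\bI(T_n)\mid T_n}=(s_0/2)\bU(T_n)$) while $\bW_n$ is $T_n$-measurable. The crucial ingredient is then a conditional sub-Gaussian bound of the form
\[
\E{e^{a\bY_n}\mid T_n}\le \exp\!\left(\frac{a^2}{8 n^2}\,Q(T_n)\right),
\]
where $Q(T_n)$ denotes the ball analogue of $\Upsilon_2(T_n)$ (essentially $\sum_v n_v^2$, with $n_v$ the number of balls beneath node $v$). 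This follows along the lines of \eqref{mgfIT*} in \refT{thm:cumulant}: conditional on the ball placement, $\bI(T_n)$ is a function of a uniform random labeling of the $n$ balls, and either a bounded-differences martingale argument (revealing buckets one at a time and applying \cite{Hoeffding}) or a ball-version of the independence decomposition of \refL{lem:independence} yields a Hoeffding-type inequality with $Q(T_n)=O(\Upsilon_2(T_n))$.

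Given the sub-Gaussian estimate, H\"older's inequality gives
\[
\E{e^{a\bY_n+b\bW_n}} \le \E{e^{2b\bW_n}}^{1/2}\E{\exp\!\left(\tfrac{a^2}{4n^2}\,Q(T_n)\right)}^{1/2}.
\]
The first factor is uniformly bounded for small $|b|$ by \citet{MR3025680}, who establish convergence of $\bW_n$ in moment generating function within a neighborhood of $0$. For the second factor, $Q(T_n)/n^2$ itself satisfies a contraction-type recursion in the split tree analogous to the one for $\bW_n$, and a variant of the contraction method (again invoking $\sum \E{V_i^2}<1$) shows that its MGF is uniformly bounded in a neighborhood of $0$. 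Combining these gives the required uniform MGF bound on $\bVecN$, from which \eqref{eq:mgflem} follows by uniform integrability.

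The main obstacle is the control of the MGF of $Q(T_n)/n^2$: standard contraction arguments in the Mallows metric give only polynomial-moment bounds, and upgrading them to exponential moments requires a R\"osler-style iteration in the spirit of the original QuickSort analysis. Finally, for the stronger conclusion $L=\infty$ under $\Prob{\exists i: V_i = 1}=0$, the split ratios $V_i$ are strictly less than $1$ almost surely, so iterating the distributional fixed-point equation \eqref{eq:Y} a bounded number of times produces contraction factors $\prod_j V_{i_j}$ that can be made arbitrarily small; this allows the neighborhood of finiteness of the limiting MGF, and correspondingly of $\E{\exp(t\cdot \bVecN)}$, to be extended inductively to all of $\R^3$.
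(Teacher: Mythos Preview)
Your approach is workable in outline but takes a longer path than the paper's, and it has two soft spots.

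The paper proceeds much more directly. It packages the recursions \eqref{eq:n:tollfunction:ball:X}, \eqref{eq:n:tollfunction:ball:Y}, \eqref{eq:n:tollfunction:ball:W} for $J_n=\bVecN$ into the single vector form $J_n \eql \sum_i A_n^{(i)}J_{n_i}^{(i)} + B_n(\nBar)$ with $A_n^{(i)}=(n_i/n)I_3$ and a \emph{deterministically bounded} toll $B_n$, and then runs a R\"osler-type induction on $n$ (\refL{lem:mgf:J}) to get $\E{e^{t\cdot J_n}}\le e^{K_t\|t\|^2}$ uniformly in $n$. The key analytic input is \refL{lem:U:bound}: for $U_n=-1+\sum_i(n_i/n)^2$ one can choose $K_t$ so that $\E{\exp(C_1|t|+K_t t^2 U_n)}\le1$, and this is exactly what makes the induction close. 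The case $L=\infty$ falls out immediately, since when $\Prob{\exists i:V_i=1}=0$ the bound in \refL{lem:U:bound} holds for all $t$.

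By contrast, you condition on $T_n$, apply a Hoeffding bound to $\bY_n$, and then need separate uniform MGF bounds for $\bW_n$ and for $Q(T_n)/n^2$. Two issues. First, your appeal to \citet{MR3025680} for MGF convergence of $\bW_n$ is doubtful: that paper establishes $d_2$-convergence, and the MGF control for general split trees is precisely what \refL{lem:U:bound}--\refL{lem:mgf:J} supply here (the paper explicitly says these extend R\"osler's binary-search-tree argument to general split trees). Second, the R\"osler iteration you propose for $Q(T_n)/n^2$ is the \emph{same} mechanism as \refL{lem:mgf:J}; once you have that machinery, it is simpler to apply it to $\bVecN$ directly and skip the conditioning and H\"older step altogether. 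Your $L=\infty$ argument is also vaguer than it needs to be: iterating the fixed-point equation for the \emph{limit} $\bVec$ does not by itself give uniform-in-$n$ control of $\bVecN$; the paper's route through \refL{lem:U:bound} handles both at once.
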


\begin{remark}
    The condition \(\Prob{\exists i:V_i=1}=0\) is necessary for \(L=\infty\). Assume the opposite. By
    \eqref{eq:ballfix}, for all \(t \in \R\),
    \begin{align*}
        \E{e^{t \bX}} 
        &
        \ge 
        \E{
            \left.
            t
            \left( 
                \sum_{i=1}^{b} U_{i}
                +
                \sum_{i=1}^{b} V_{i} \bX^{(i)}
                +
                \frac{s_0}{2}
                C\left( \cV \right)
            \right) \right|
            \exists i:V_i=1
        }
        \Prob{\exists i:V_i=1}
        \\
        &
        =
        \E{e^{t \sum_{i=1}^b U_{i}}}
        \Prob{\exists i:V_i=1}
        \E{e^{t \bX}}
        ,
    \end{align*}
    where \(U_{i}\) are independent \(\Unif[0,1]\).
    This implies that \(\E{e^{t \bX}} = \infty\) if we 
    chose \(t\) large enough such that \(\E{e^{t \sum_{i=1}^b U_{i}}} \Prob{\exists i:V_i=1} > 1\).
\end{remark}

The proofs of the next two lemmas are similar to Lemma 4.1 by \citet{MR1104413}, which deals with the
total path length of binary search trees.  However, we have extended the result to cover general
split trees.  Moreover, \refL{lem:mgf:J} can be applied not only to
inversions and the total path length, but also to any properties of split trees that satisfies the
assumptions.

\begin{lemma}
    \label{lem:U:bound}
    Let \(C_1 > 0\) be a constant.
    There exists a constant \(L\) such that for all \(t \in (-L,L)\), there exists \(K_{t} \ge 0\)
    such that
    \begin{equation}
        \E{\exp\{C_1 |t| + t^2 K_t U_n\}}
        \le 1,
        \qquad
        \text{for all}
        \quad
        n \in \N,
        \label{eq:fbound}
    \end{equation}
    where
    \[
        U_n \eqd -1 + \sum_{i=1}^b \bfrac{n_i}{n}^2.
    \]
    If we further assume that \(\Prob{\exists i:V_i=1}=0\), then \(L = \infty\).
\end{lemma}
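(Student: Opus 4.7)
Rewriting the desired inequality as $\E{e^{t^2 K_t U_n}} \le e^{-C_1|t|}$, my plan exploits two structural facts about $U_n$: it takes values in $[-1, 0]$ deterministically (since $(n_i/n)^2 \le n_i/n$ and $\sum_i n_i \le n$), and the centered mean $-\E{U_n}$ stays bounded below by a positive constant uniformly in $n$. The latter is crucial: for $n \le s$ all balls stay at the root and $U_n \equiv -1$, while for $n > s$ the conditional-multinomial distribution of $(n_1,\dots,n_b)$ together with bounded convergence gives $\E{(n_i/n)^2} \to \E{V_i^2}$, and hence $|\E{U_n}| \to 1 - \sum_i \E{V_i^2}$, which is strictly positive by \eqref{cond:V2}. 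Combined with strict positivity at each finite $n$ (since $s_0 > 0$ forces $\sum_i(n_i/n)^2 < 1$ deterministically), we obtain $\alpha := \inf_{n \ge 1} |\E{U_n}| > 0$.

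With $\alpha$ in hand, the basic claim follows from a convexity bound. The map $u \mapsto e^{su}$ is convex on $[-1, 0]$ for $s \ge 0$, so linear interpolation gives $e^{s U_n} \le 1 + U_n(1 - e^{-s})$ and, after taking expectations and substituting $s = t^2 K_t$,
\begin{equation}
\E{e^{t^2 K_t U_n}} \le 1 + \E{U_n}\bigl(1 - e^{-t^2 K_t}\bigr) \le 1 - \alpha\bigl(1 - e^{-t^2 K_t}\bigr).
\end{equation}
Setting $L := -\log(1 - \alpha)/C_1$ ensures $1 - e^{-C_1|t|} < \alpha$ for $|t| < L$, so one can solve $\alpha(1 - e^{-t^2 K_t}) = 1 - e^{-C_1|t|}$ for a finite $K_t$, namely $K_t = -t^{-2}\log\bigl(1 - (1 - e^{-C_1|t|})/\alpha\bigr)$ (with $K_0 := 0$). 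Multiplying through by $e^{C_1|t|}$ yields the claim.

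For the stronger statement $L = \infty$ under $\Prob{\exists i : V_i = 1} = 0$, the convexity bound saturates at $1 - \alpha > 0$ as $K_t \to \infty$, so a sharper argument is needed. I would split at a threshold $\delta = \delta(t) > 0$:
\begin{equation}
\E{e^{t^2 K_t U_n}} \le e^{-t^2 K_t \delta} + \Prob{U_n > -\delta}.
\end{equation}
Since $U_n > -\delta$ forces $\max_i n_i/n > 1 - \delta$, bounding $\sup_n \Prob{U_n > -\delta}$ reduces to tail bounds on $\max_i n_i/n$, which follow from multinomial concentration of $n_i/n$ around $V_i$ combined with $\Prob{\max_i V_i > 1 - \delta} \to 0$ as $\delta \to 0$, valid precisely because of the stronger assumption. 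Choosing $\delta$ small and $K_t$ large in the right balance makes both terms at most $\tfrac12 e^{-C_1|t|}$. The main obstacle is this last step: making the tail bound $\sup_n \Prob{U_n > -\delta}$ quantitatively uniform in $n$, especially for moderate $n$ where multinomial concentration around $(V_1,\dots,V_b)$ has not yet taken hold.
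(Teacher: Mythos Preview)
Your argument for the finite-$L$ statement is correct and takes a genuinely different route from the paper. The paper splits the expectation according to whether $U_n \ge -\varepsilon$, bounding the contribution of $\{U_n \ge -\varepsilon\}$ by $e^{C_1 L}\,\Prob{U_n \ge -\varepsilon}$ and using $U_n \inas -1+\sum_i V_i^2$ to make this probability uniformly smaller than $1$. You instead exploit only the \emph{mean} of $U_n$: the chord inequality $e^{sU_n} \le 1 + U_n(1-e^{-s})$ on $[-1,0]$ together with $\alpha := \inf_n |\E{U_n}| > 0$ gives the bound directly. This is arguably cleaner for the basic claim, since it needs no distributional information about $U_n$ beyond its range and first moment; the price is that the bound saturates at $1-\alpha$, which is why you must switch methods for $L=\infty$.

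For $L=\infty$ your threshold split is exactly the paper's argument, and the obstacle you flag is not a real difficulty. Given any target $\eta>0$, first pick $\varepsilon$ with $\Prob{-1+\sum_i V_i^2 \ge -\varepsilon} < \eta/2$ (possible since $p=0$); convergence in distribution then gives $n_0$ with $\Prob{U_n \ge -\varepsilon} < \eta$ for all $n \ge n_0$. For the finitely many $n<n_0$ you already observed the deterministic bound $U_n \le -s_0/n \le -s_0/n_0$ coming from $s_0>0$, so after replacing $\varepsilon$ by $\min(\varepsilon, s_0/n_0)$ the bound $\sup_n \Prob{U_n \ge -\varepsilon} < \eta$ holds. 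Now take $\eta = \tfrac12 e^{-C_1|t|}$ and then $K_t$ large enough that $e^{-t^2K_t\varepsilon} \le \tfrac12 e^{-C_1|t|}$. No multinomial concentration is needed; a.s.\ convergence of $U_n$ (plus the deterministic small-$n$ bound) suffices. Incidentally, the paper's written proof only treats $n\ge n_0$ explicitly and leaves the small-$n$ case implicit, so you were right to flag the issue---but the fix is the one-line observation above, which you already had in hand.
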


\begin{proof}
    Let \(p = \Prob{\exists i:V_i = 1}\).
    Recalling the assumption that \(p < 1\), we can choose a constant \(\delta \in (0, 1-p)\).
    Then for \(\varepsilon\) small enough
    \[
        \Prob{-1+\sum_{i=1}^{b}V_{i}^2 \ge -\varepsilon} 
        \le
        \Prob{\exists i: V_{i}=1} + \frac{\delta}{2} = p + \frac{\delta}{2}
        .
    \]
    Since \(U_{n} \inas -1 + \sum_{i=1}^{b} V_{i}^{2}\), there exists \(n_0 \in \N\) such that
    \[
        \Prob{U_{n} \ge -\varepsilon}
        \le 
        \Prob{-1+\sum_{i=1}^{b}V_{i}^2 \ge -\varepsilon} 
        + 
        \frac{\delta}{2}
        \le p+\delta
        <1
        ,
        \qquad
        \text{for all}
        \quad
        n \ge n_{0}
        .
    \]
    Together with \(U_{n} \le 0\), the above inequality implies that for all \(n \ge n_0\),
    \(t \in (-L, L)\), and \(K_{t} \in \R\),
    \begin{equation}
        \E{\ind_{[U_{n} \ge -\varepsilon]} \exp\left( C_1 |t| + t^{2}K_{t} U_{n} \right)}
        \le
        e^{C_{1}L} \left( p+\delta \right)
        < 1
        ,
        \label{eq:fbound:a}
    \end{equation}
    if \(L\) is small enough.
    On the other hand,
we may assume that $t\neq0$ and then 
    \begin{equation}
        \E{\ind_{[U_{n} < -\varepsilon]} \exp\left( C_{1}|t| + t^{2}K_{t} U_{n}
            \right)}
        \le 
        \exp\left( C_{1}|t|-t^{2}K_{t} \varepsilon \right)
        <
        1- e^{C_{1}L}(p+\delta),
        \label{eq:fbound:b}
    \end{equation}
    if \(K_{t}\) is large enough.
    Together \eqref{eq:fbound:a} and \eqref{eq:fbound:b} implies \eqref{eq:fbound}.
    Note that if \(p = 0\), then \(L\) can be arbitrarily large.
\end{proof}

\newcommand{\Ain}{A_{n}^{(i)}}
\newcommand{\Jin}{J_{n}^{(i)}}
\newcommand{\Jini}{J_{n_i}^{(i)}}

\begin{lemma}\label{lem:mgf:J}
Let \((J_{n})_{n \ge 1}\) be a sequence of \(d\)-dimensional random vectors.
    Let \((\Jin)_{n \ge 1}\) for \(i = 1,\dots,b\) be independent copies of \( (J_{n})\).
    Let \(\Ain\) be a diagonal matrix with \(n_i/n\) on its diagonal.
    Let \( (B_{n})_{n \ge 1}\) be a sequence of random \(\N^{b} \to \R^{d}\) functions.
    Assume that conditioning on \(\nBar\),
    \begin{equation}
        J_{n} \eql \sum_{i=1}^{b} \Ain \Jini + B_{n}(\nBar).
        \label{eq:J:rec}
    \end{equation}
    Further assume that \(\sup_{n \ge 1} \norm{B_n(\nBar)} < C_1\) and \(\norm{J_{1}} < C_2\) deterministically for some
    constants \(C_1, C_2\) and that \(s_0 > 0\).
    Then there exists a constant \(L \in (0, \infty]\), such that
    for all \(t \in \R^d\) with \(\norm{t}<L\),
    there exists \(K_t \ge 0\), such that
    \begin{equation}
    \E{\exp(t\cdot J_{n})} \leq \exp( \norm{t}^2 K_t),
    \qquad
    \text{for all} 
    \quad
    n \in \N 
    .
    \label{eq:J:mgf}
    \end{equation}
    Moreover, if \(J_{n} \inlaw J^{*}\), then for all \(t \in \R^{d}\) with \(\norm{t} < L\),
    \begin{equation}\label{eq:J:mgf:converge}
        \E{\exp\left(t \cdot J_{n}\right)}
        \to
        \E{\exp\left(t \cdot J^{*} \right)}
<\infty.
    \end{equation}
If we further assume that \(\Prob{\exists i:V_i=1}=0\), then \(L = \infty\).
\end{lemma}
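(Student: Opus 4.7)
The plan is to prove the MGF bound \eqref{eq:J:mgf} by induction on $n$ using the recursion \eqref{eq:J:rec}, with Lemma~\ref{lem:U:bound} providing the critical contractive estimate on $U_n = -1 + \sum_{i} (n_i/n)^2 \le 0$. Because the data $\|B_n\| \le C_1$ and $\|J_1\| \le C_2$ contribute linearly in $\|t\|$, I would strengthen the inductive claim to
\[
\E{\exp(s \cdot J_n)} \le \exp\bigl(\|s\|^2 K + \|s\| M\bigr)
\quad \text{for all } n \ge 1 \text{ and } \|s\| \le \|t\|,
\]
for constants $K = K_t$ and $M$ to be fixed; the form in \eqref{eq:J:mgf} is then recovered by absorbing the linear term (e.g., set $K_t := K + M/\|t\|$).

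The base case handles $n$ small enough that the split-tree recursion terminates trivially, using $\|J_n\| \le C_1 + C_2$ deterministically and choosing $M$ large enough. For the inductive step at $n$ where the recursion is nontrivial, the assumption $s_0 > 0$ guarantees $n_i \le n - s_0 < n$, so the inductive hypothesis applies to each sub-piece at the scaled argument $s' = (n_i/n) s$ (noting $\|s'\| \le \|s\| \le \|t\|$). Conditioning on $\bar n$ and using independence of the $J_{n_i}^{(i)}$,
\begin{align*}
\E{\exp(s \cdot J_n) \mid \bar n}
&\le \exp(\|s\| C_1) \prod_{i=1}^{b} \exp\bigl((n_i/n)^2 \|s\|^2 K + (n_i/n) \|s\| M\bigr) \\
&\le \exp\bigl(\|s\|^2 K + \|s\| M\bigr)\, \exp\bigl(\|s\| C_1 + \|s\|^2 K U_n\bigr),
\end{align*}
where I used $\sum_i n_i/n \le 1$ and $\sum_i (n_i/n)^2 = 1 + U_n$. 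Taking expectation over $\bar n$, the induction closes provided $\E{\exp(\|s\| C_1 + \|s\|^2 K U_n)} \le 1$, which is exactly the content of Lemma~\ref{lem:U:bound} applied with constant $C_1$.

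The main obstacle is reconciling the fact that Lemma~\ref{lem:U:bound} produces a $K$ whose required size depends on the argument (growing as the argument shrinks to $0$), while the induction must apply the hypothesis at all sub-scale arguments $\|s'\| \le \|s\| \le \|t\|$. I would address this by fixing the target $t$ upfront, choosing $K$ large enough to work at $\|s\| = \|t\|$, and exploiting that $U_n \le 0$ so that inflating $K$ only tightens the negative exponential term and preserves the inequality in Lemma~\ref{lem:U:bound}; the residual slack at smaller $\|s\|$ is then absorbed into the linear $\|s\| M$ term by enlarging $M$. Once \eqref{eq:J:mgf} is established, the MGF convergence \eqref{eq:J:mgf:converge} follows by a standard uniform integrability argument: bounded MGFs on an open neighborhood of $t$ give uniform integrability of $\exp(t \cdot J_n)$, so $J_n \inlaw J^*$ upgrades to MGF convergence with finite limit (cf.\ \cite[Theorem~5.9.5]{g13}). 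The strengthening to $L = \infty$ under $\Prob{\exists i : V_i = 1} = 0$ transfers directly from the analogous conclusion in Lemma~\ref{lem:U:bound}.
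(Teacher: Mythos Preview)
Your overall strategy---induct on $n$ via the recursion, peel off $e^{C_1\|s\|}$ from $B_n$, apply the hypothesis at the rescaled arguments $(n_i/n)s$, close with Lemma~\ref{lem:U:bound}, and then upgrade to MGF convergence via uniform integrability---is exactly the paper's. The paper runs the induction at a single fixed $t$ and applies the hypothesis at the sub-scale vectors $(n_i/n)t$ with the same constant $K_t$ without comment; you correctly flag this as the delicate point.

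However, your proposed repair does not close the gap. In your own displayed inductive step, the linear term contributes $M\|s\|\sum_i n_i/n \le M\|s\|$, so it is simply reproduced on both sides and drops out; the residual factor you must bound by $1$ is still
\[
\E{\exp\bigl(C_1\|s\| + K\|s\|^2 U_n\bigr)},
\]
with no $M$ in it. And no single $K$ makes this $\le 1$ uniformly for $\|s\|\in(0,\|t\|]$: since $U_n\le 0$ the expression is at most $e^{C_1\|s\|}$, but its $\|s\|$-derivative at $0$ equals $C_1>0$, so for every fixed $K$ it exceeds $1$ for all sufficiently small $\|s\|$. ``Inflating $K$'' helps only at any fixed positive scale, and replacing $\sum_i n_i/n\le 1$ by the exact $1-s_0/n$ adds only $-M\|s\|s_0/n$ to the exponent, which vanishes as $n\to\infty$. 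So the step ``absorb the residual slack into the linear $\|s\|M$ term by enlarging $M$'' is hand-waving that does not go through as written; the sub-scale issue you identified remains unresolved in your argument just as it is glossed over in the paper's.
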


\begin{proof}
It follows from \refL{lem:U:bound} that there exists an \(L \in (0, \infty]\), such that
for all \(t\) with \(\norm{t} < L\), there exists \(K_{t} \ge 0\), such that
\begin{equation}
    \E{\exp\left(C_{1} \norm{t} + K_{t} \norm{t}^{2} U_{n}\right)}
    \le 1
    .
    \label{eq:mgf:factor}
\end{equation}
Now we use induction on \(n\).
Since \(\norm{J_1} < C_2\), we can increase \(K_t\) such that \eqref{eq:J:mgf} holds for \(n=1\).
Assuming that it holds also for all \(J_{n'}\) with \(n' < n\), we have
\begin{align}
    \E{\exp\left( t \cdot J_n \right)}
    &
    =
    \E{\exp\left( t \cdot B_{n}(\nBar) + t \cdot \sum_{i=1}^b \Ain \Jini \right)}
    \\
    &
    \le
    e^{C_{1} \norm{t}} \E{\sum_{i=1}^{b} K_{t} \left(\norm{t} \frac{n_{i}}{n}  \right)^{2} }
    \\
    &
    =
    e^{K_{t} \norm{t}^{2}}
    \E{\exp\left({C_{1} \norm{t} + K_{t} \norm{t}^{2} U_{n}}\right)}
    \le
    e^{K_{t} \norm{t}^{2}},
    \label{eq:bX:mgf1}
\end{align}
where we use \eqref{eq:mgf:factor} and that \(n_{i} < n\) for \(i=1\dots,b\) (since \(s_0 > 0\)).
The above inequality implies that
\( (e^{t \cdot J_{n}})_{n\ge1} \), are 
uniformly integrable (see \cite[Theorem\
5.4.2]{g13}).  
Therefore 
\(J_{n} \inlaw J^{*}\) implies \eqref{eq:J:mgf:converge}
(see \cite[Theorem\ 5.5.2]{d10}).
\end{proof}

\begin{proof}[Proof of Lemma \ref{lem:mgflem}]
    Let \(J_{n} = \bVecN\). Then \eqref{eq:n:tollfunction:ball:X}, \eqref{eq:n:tollfunction:ball:Y},
    \eqref{eq:n:tollfunction:ball:W} can be written as
    \begin{equation}
        J_{n} \eql \sum_{i=1}^{b} \Ain \Jini + B_{n}(\nBar),
    \end{equation}
    where \(\Ain\) for \(i =1,\dots,b\) are as in Lemma \ref{lem:mgf:J} and 
    \begin{equation}
        B_{n}(\nBar) = 
        \left[
            \frac{\bZ_{\rho}}{n} + \frac{s_0}{2} \bTollN(\nBar),
            \frac{\bZ_{\rho}}{n} - \frac{s_0}{2} \frac{n-s_0}{n},
            \frac{n-s_0}{n} + \bTollN(\nBar)
        \right]^{\bf T}
        ,
        \label{eq:B}
    \end{equation}
    where \({\bf T}\) denotes the transposition of a matrix.
    By \refL{lem:ball:Mallows}, 
    \( J_{n}\) converges in distribution to
    \(\bVec\).
    Note that \(\norm{B_{n}(\nBar)}\) is bounded.
    Therefore \refL{lem:mgf:J} implies that there exists an \(L \in (0,\infty]\) such
    that for all \(t \in \R^{3}\) with \( \norm{t} < L\), 
\(\E{e^{t \cdot J_{n}}} \to \E{e^{t \cdot \bVec}}<\infty\).
\end{proof}

\subsection{Split tree inversions on nodes}\label{sec:splitnodes}

We turn to node inversions in a split tree. The main challenge in this context is that the number
$N$ of nodes is random in general. Thus we will limit our analysis to split trees satisfying 
the following two assumptions
\begin{equation}
    \frac{N}{n} \inLII \alpha,
    \label{eq:tplassumption:1}
\end{equation}
and
\begin{equation}
    \E{\U(T_n)} = \frac{\a}{\m}n\ln n + n\varpi(\ln n) + o(n),
    \label{eq:tplassumption}
\end{equation}
for some  constant $\a \in (0, 1]$ and some continuous periodic function $\varpi$ with period $d = \sup\{a \geq 0 : \Prob{\ln V\in a\Z}=1\}$ (constant if
$d = 0$), with $\m = -\sum \E{V_1\ln V_1}$.

These two conditions are satisfied for many types of split trees. \citet{MR2878784} showed that if
\(\ln V_{1}\) is non-lattice, i.e., \(d=0\), then \(\E{N}/n = \alpha + o(1)\) and furthermore
\eqref{eq:tplassumption:1} holds. However, in the lattice case, \citet{MR995343} showed that, for
tries (split trees with \(s_0=0\) and \(s=1\)) with a fixed split vector \( (1/b,\dots,1/b)\),
\(\E{N}/n\) does not converge. Thus \eqref{eq:tplassumption:1} cannot be true for these trees.

Condition \eqref{eq:tplassumption} has been shown to be true for many types of split trees
including \(m\)-ary search trees \cite{MR905782, MR2035872, MR2504401, MR987097}. More specifically,
\citet{MR3025680} showed that in the non-lattice case, if \(\E{N}/n = \alpha + O(\ln^{-1-\varepsilon}
    n)\) for some \(\varepsilon > 0\), then \eqref{eq:tplassumption} is satisfied. However,
\citet{MR2735344} showed that even in the non-lattice case, there exist 
tries with some very special parameter values where \(\E{n}/n - \alpha\) tends to zero arbitrarily
slowly.

We have the following theorem that is similar to \refT{thm:split}.
\begin{theorem}
Assume the split tree $T_n$ satisfies \eqref{eq:tplassumption:1} and \eqref{eq:tplassumption} and define
$$
X_n = \frac{I(T_n) - \E{I(T_n)}}{n}, \quad Y_n = \frac{I(T_n) - \frac12 \U(T_n)}{n}, \quad W_n = \frac{\U(T_n) - \E{\U(T_n)}}{n}.
$$
Assume that \(\Prob{\exists i:V_i = 1} < 1\).
Let $\Toll(\cV)$ be as in \eqref{eq:tollfunction:ball}.
Let $\nVec$ be the unique solution in \(\mZeroIII\) for the
system of
fixed-point equations
\begin{equation}
    \left[
    \begin{aligned}
        & X \\
        & Y \\
        & W
    \end{aligned}
    \right]
    \eql
    \left[
    \begin{aligned}
        & \sum_{i=1}^b V_i X^{(i)} + \a U_0 + \frac{\a}{2}\Toll(\cV) \\
        & \sum_{i=1}^b V_i Y^{(i)} + \a \left(U_0-\frac{1}{2}\right) \\
        & \sum_{i=1}^b V_i W^{(i)} + \a + \a \Toll(\cV)
    \end{aligned}
    \right]
    .
\end{equation}
Here \( (V_{1},\dots,V_{b})\), \(U_{0}\),
\(
    (X^{(1)}, Y^{(1)}, W^{(1)}),
    \dots,
    (X^{(b)}, Y^{(b)}, W^{(b)})
\)
are independent,
with \(U_{0} \sim \Unif[0,1]\) and
\(\nVecI \sim \nVec\) for \(i = 1,\dots,b\).
Then \(\nVecN \dMto \nVec\).
If \(s_0 > 0\), then \(\nVecN\) also converges to \(\nVec\) in moment generating function
within a neighborhood of the origin.
\end{theorem}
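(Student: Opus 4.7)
The plan is to follow the same scheme as the proof of Theorem~\ref{thm:split}: derive a distributional recursion for \(\nVecN\), verify the hypotheses of Neininger's theorem (Theorem~4.1 of \cite{MR1871564}) to obtain Mallows convergence, and then apply Lemma~\ref{lem:mgf:J} for the moment generating function convergence when \(s_0>0\). The only new ingredient is the random number of nodes \(N\) of \(T_n\) together with the constant \(\a\) from assumption \eqref{eq:tplassumption:1}, which take the places played by \(n\) and \(1\) in Section~\ref{sec:Neininger}.

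I would first derive the recursion. Writing \(\bar n=(n_1,\dots,n_b)\) for the root's subtree sizes, Lemma~\ref{lem:independence} gives \(I(T_n)\eqdd Z_\rho+\sum_{i=1}^b I(T_{n_i}^{(i)})\) with \(Z_\rho\sim\Unif\{0,1,\dots,N-1\}\) and \(T_{n_i}^{(i)}\) independent. Combined with \(\U(T_n)=(N-1)+\sum_i\U(T_{n_i}^{(i)})\) and \(\E{I(T_n)}=\tfrac12\E{\U(T_n)}\), this yields, conditional on \(\bar n\),
\begin{align*}
X_n &\eqdd \sum_{i=1}^b \frac{n_i}{n} X_{n_i}^{(i)} + \frac{Z_\rho}{n} + \tfrac{1}{2}\TollN(\bar n),\\
Y_n &\eqdd \sum_{i=1}^b \frac{n_i}{n} Y_{n_i}^{(i)} + \frac{Z_\rho}{n} - \frac{N-1}{2n},\\
W_n &\eqdd \sum_{i=1}^b \frac{n_i}{n} W_{n_i}^{(i)} + \frac{N-1}{n} + \TollN(\bar n),
\end{align*}
where \(\TollN(\bar n)\eqd -\E{\U(T_n)}/n+\sum_i\E{\U(T_{n_i})}/n\) is the node analogue of the toll function in \eqref{eq:n:tollfunction:ball:W}.

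The heart of the argument is then to verify the conditions of Neininger's theorem. The contraction condition \(\sum_i\E{V_i^2}<1\) follows from \(\Prob{\exists i:V_i=1}<1\) and the non-concentration condition on \((n_i/n)^2\) is handled just as in Section~\ref{sec:Neininger}. The new work is the joint \(L^2\)-convergence
\begin{equation*}
\Bigpar{\tfrac{Z_\rho}{n},\tfrac{n_1}{n},\dots,\tfrac{n_b}{n},\tfrac{N-1}{n},\TollN(\bar n)}
\inLII
\bigpar{\a U_0,V_1,\dots,V_b,\a,\a\Toll(\cV)}.
\end{equation*}
For \(\TollN(\bar n)\to\a\Toll(\cV)\) I would substitute \eqref{eq:tplassumption} into the definition of \(\TollN\) and proceed as in Lemma~\ref{lem:toll:func}: the oscillating contribution \(\sum(n_i/n)\varpi(\ln n_i)-\varpi(\ln n)\) vanishes in the limit because the period of \(\varpi\) matches the lattice generated by \(\ln V_i\); the main term \((\a/\m)\sum(n_i/n)\ln(n_i/n)\) tends to \(\a\Toll(\cV)\); and the residual \((\sum n_i/n-1)(\a/\m)\ln n=-(s_0/n)(\a/\m)\ln n\) tends to zero. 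For \(Z_\rho/n\to\a U_0\), I would couple \(Z_\rho\) to a single independent \(\Unif[0,1]\) variable \(U_0\) by writing \(Z_\rho=\ceil{NU_0}-1\); combined with assumption \eqref{eq:tplassumption:1}, the \(L^2\)-convergence \(N/n\inLII\a\) then propagates to \(Z_\rho/n\inLII\a U_0\). This coupling, which simultaneously handles the discreteness of \(Z_\rho\) and the randomness of \(N\), is the main obstacle and the step where the random node count genuinely differs from the balls case.

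With these ingredients, Neininger's theorem delivers \(\nVecN\dMto\nVec\). For the mgf convergence under \(s_0>0\), I would apply Lemma~\ref{lem:mgf:J} with \(J_n=\nVecN\) and the toll vector \(B_n(\bar n)\) assembled from the three remainders in the display above; its components are all uniformly bounded since \(N\le n\), \(Z_\rho\le N\), and \(\TollN(\bar n)\) is bounded deterministically by the entropy bound \(\sum V_i\ln V_i\ge-\ln b\). The assumption \(s_0>0\) forces \(n_i<n\), so Lemma~\ref{lem:mgf:J} gives the uniform mgf estimate and, together with the Mallows convergence, yields \(\E{\exp(t\cdot\nVecN)}\to\E{\exp(t\cdot\nVec)}<\infty\) in a neighborhood of the origin.
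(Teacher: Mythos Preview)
Your approach is correct and matches the paper's own (sketched) proof: derive the three-dimensional recursion, verify Neininger's conditions---with assumption \eqref{eq:tplassumption} used precisely for the toll-function convergence $\TollN(\bar n)\inLII\a\Toll(\cV)$ as the paper emphasizes in \eqref{eq:c:n:w}, and assumption \eqref{eq:tplassumption:1} used to get $Z_\rho/n\inLII\a U_0$ and $(N-1)/n\inLII\a$---and then invoke Lemma~\ref{lem:mgf:J} under $s_0>0$, which ensures $N\le n$ and hence a bounded toll vector. The paper explicitly leaves these details to the reader; your write-up fills them in faithfully.
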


The convergence in Mallows metric again follows from \citet[Theorem 4.1]{MR1871564}. We leave the
details to the reader as it is rather similar to inversions on balls.
However, we emphasize that the assumption \eqref{eq:tplassumption} is needed to argue that
\begin{equation}
    \TollN(\nBar) \eqd -\frac{\E{\U(T_n)}}{n} + \frac1n\sum_{i=1}^b \E{\U(T_{n_i})} 
    \inLII
    \frac{\a}{\m} \sum_{i=1}^b V_i\ln V_i = \a \Toll(\cV).
    \label{eq:c:n:w}
\end{equation}
For convergence in moment generating function, note that \(s_0 > 0\) implies \(N \le n\) and
\(Z_{\rho}/n \le 1\). Therefore, we can again apply \refL{lem:mgf:J} as in \refS{sec:split:mgf}.

\section{A sequence of conditional Galton--Watson trees}\label{sec:galtonwatson}

Let $\xi$ be a random variable with $\E{\xi} = 1$, 
$\va\ \xi = \s^2 <\infty$,
and $\E{e^{\a\xi}}<\infty$ for some $\a>0$, 
(The last condition is used in the proof below, but is presumably not
necessary.) 
Let $G^\xi$ be a (possibly infinite) Galton--Watson tree with offspring distribution $\xi$. The {\em conditional Galton--Watson tree} $T^\xi_n$ on $n$ nodes is given by
$$
\Prob{T^\xi_n = T} = \Prob{G^\xi = T\ \middle|\ G^\xi \text{ has $n$ nodes}}
$$
for any rooted tree $T$ on $n$ nodes. The assumption $\E{\xi} = 1$ is justified by noting that if $\zeta$
is such that $\Prob{\xi = i} = c\th^i\Prob{\zeta=i}$ for all $i\geq 0$ then $T^\xi_n$ and $T^\z_n$
are identically distributed; 
hence it is typically possible to replace an
offspring distribution $\zeta$ by an equivalent one with mean 1, 
see \cite[Sec.\ 4]{j12}. 

We fix some $\xi$ and drop it from the notation, writing $T_n =
T^\xi_n$.

In a fixed tree $T$ with root $\r$ and $n$ total nodes, for each node $v\neq \r$ let $\QQ_v\sim\Unif(-1/2, 1/2)$, all independent, and let $\QQ_\r = 0$. For each node $v$ define
$$
\F_v \eqd \sum_{u \leq v} \QQ_u, \quad 
\text{and let}\quad J(T) \eqd \sum_{v \in T} \F_v.
$$
In other words, \(\F_u\) is the sum of $\QQ_v$ for all $v$ on the path from the root to \(u\).
For each $v\neq \r$ also define $Z_v = \floor{(\QQ_v + 1/2)z_v}$, where $z_v$ denotes the size of the
subtree rooted at $v$. Then $Z_v$ is
uniform in $\{0,1,\dots,z_v-1\}$, and by Lemma \ref{lem:independence}, the quantity
$$
I^*(T) \eqd \sum_{v\neq \r} \bigpar{Z_v - \E{Z_v}}
$$
is equal in distribution to the centralized number of inversions in the tree
$T$, ignoring inversions 
involving $\r$. The main part \eqref{gw} of Theorem \ref{thm:galtonwatson} will follow from arguing that for a conditional
Galton--Watson tree $T_n$,
\begin{equation}\label{eq:snakelimit}
\frac{J(T_n)}{n^{5/4}} \dto \YY \eqd \frac{1}{\sqrt{12\s}}\sqrt{\eta} \cN.
\end{equation}
Indeed, under the coupling of $\QQ_v$ and $Z_v$ above,
$$
J(T_n) = \sum_v \F_v
= \sum_v \sum_{u:u \le v} \QQ_u
= \sum_u \QQ_u \sum_{v:u \le v} 1
= \sum_u \QQ_u z_u
\leq \sum_{u \neq \r} \left(Z_u - \frac{z_u}{2} + 1\right)
< n + I^*(T_n)
,
$$
and similarly $J(T_n) > I^*(T_n) - n$. As $\r$ contributes at most $n$ inversions to $I(T_n)$, it
follows from the triangle inequality that $|J(T_n) - (I(T_n)-\U(T_n)/2)| \leq 2n = o(n^{5/4})$.
Thus \eqref{eq:snakelimit}, once proved, will imply that
\[
    \YY_n 
    \eqd 
    \frac{I(T_n)-\U(T_n)/2}{n^{5/4}}
    =
    o(1) + \frac{J(T_n)}{n^{5/4}}
    \inlaw
    \YY.
\]

The quantity $J(T_n)$ and the limiting distribution \eqref{eq:snakelimit}
have been considered by several authors.
In the interest of keeping this section
self-contained, we will now outline the proof of \eqref{eq:snakelimit} which
relies on the concept 
of a {\em discrete snake}, a random curve which under proper rescaling
converges to a \emph{Brownian snake}, a curve related to a standard
Brownian excursion. 
This convergence was shown by \citet{MR2033198}, and later in more generality by
\citet{jm05}, whose notation we use.

Define $f:\{0,\dots,2(n-1)\}\to V$ by saying that $f(i)$ is the location of a depth-first search
(under some fixed ordering of nodes) at stage $i$, with $f(0) = f(2(n-1)) = \r$. Also define $V_n(i) =
d(\r, f(i))$ where $d$ denotes distance. The process $V_n(i)$ is called the depth-first walk, the
Harris walk or the tour of $T_n$. For non-integer values $t$, $V_n(t)$ is given by linearly
interpolating adjacent values. See Figure \ref{fig:dfw}.
\begin{figure}[ht]
\centering
\begin{subfigure}{0.4\textwidth}
\includegraphics[page=1,width=\textwidth]{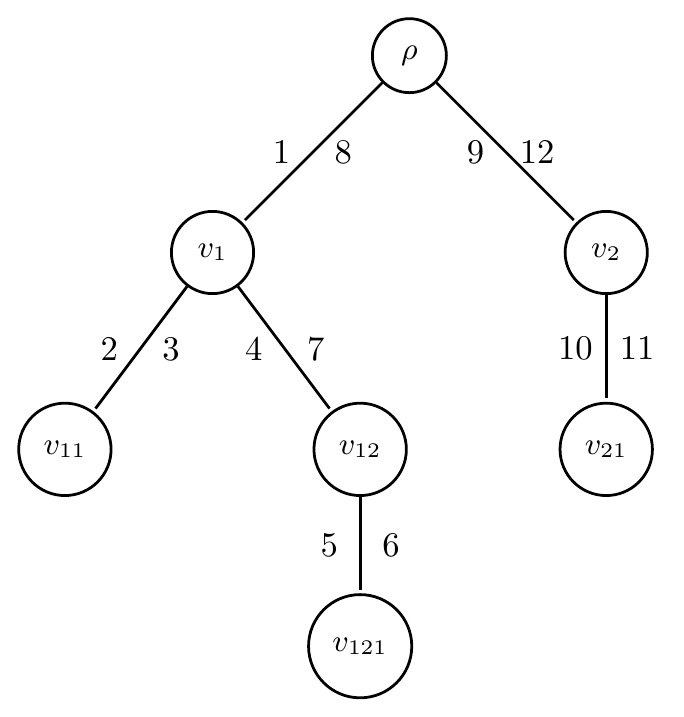}
\end{subfigure}
\begin{subfigure}{0.5\textwidth}
\includegraphics[page=2,width=\textwidth]{gw.pdf}
\end{subfigure}
\caption{The depth-first walk $V_n(t)$ of a fixed tree.}
\label{fig:dfw}
\end{figure}

Finally, define $R_n(i) \eqd \F_{f(i)}$ to be the value at the vertex visited after $i$ steps. For non-integer values $t$, $R_n(t)$ is defined by linearly interpolating the integer values. Also define $\widetilde{R}_n(t)$ by $\widetilde{R}_n(t) \eqd R_n(t)$ when $t\in \{0,1,\dots,2n\}$, and
$$
\widetilde{R}_n(t) \eqd \left\{\begin{array}{lcl}
R_n(\lfloor t\rfloor), & \text{if} & V_n(\lfloor t\rfloor) > V_n(\lceil t\rceil), \\
R_n(\lceil t\rceil), & \text{if} & V_n(\lfloor t\rfloor) < V_n(\lceil t\rceil).
\end{array}\right.
$$
In other words, $\widetilde{R}_n(t)$ takes the value of node $f(\lfloor t\rfloor)$ or $f(\lceil
t\rceil)$, whichever is further from the root. We can recover $J(T_n)$ from $\widetilde{R}_n(t)$ via
$$
2J(T_n) = \int_0^{2(n-1)} \widetilde{R}_n(t)dt.
$$
Indeed, for each non-root node $v$ there are precisely two unit intervals during which $\widetilde{R}_n(t)$ draws its value from $v$, namely the two unit intervals during which the parent edge of $v$ is being traversed. Now, since $\QQ_v\sim \text{Unif}(-1/2,1/2)$ we have $|R_n(i) - R_n(i-1)| \leq 1/2$ for all $i>0$ and
$$
\frac{J(T_n)}{n^{5/4}} = \frac{1}{2n^{5/4}} \int_{0}^{2(n-1)} \widetilde{R}_n(t) \mathrm dt =
\frac{1}{2n^{5/4}} \int_0^{2(n-1)} R_n(t) dt + O(n^{-1/4}) = \int_0^1r_n(s)ds + o(1),
$$
where $r_n(s) \eqd n^{-1/4}R_n(2(n-1)s)$. 
Also normalize $v_n(s) \eqd n^{-1/2}V_n(2(n-1)s)$. Theorem 2 of
\cite{jm05} 
(see also \cite{MR2033198})
states that $(r_n, v_n) \dto (r, v)$ in $C[0,1] \times C[0,1]$, with $r,v$ to be defined
shortly. 

Before defining $r$ and $v$, we will briefly motivate what they ought to be. Firstly, as the
offspring distribution $\xi$ of $T_n$ satisfies $\E{\xi} = 1$, we expect the
tour $V_n$ to be roughly a
random walk with zero-mean increments, conditioned to be non-negative and return to the origin at
time $2(n-1)$, and the limiting law $v$ ought to be a Brownian excursion (up
to a constant scale factor).
Secondly, consider a node $u$
and the path $\r = u_0,u_1\dots,u_d = u$, where $d$ is the depth of $u$. We can define a random walk
$\F_u(t)$ for $t=0,\dots,d$ by $\F_u(0) = 0$ and $\F_u(t) = \sum_{i=1}^t \QQ_{u_{i}}$ for $t > 0$,
noting that $\F_u = \F_u(d)$. Under rescaling, the random walk $\F_u(t)$ will behave like Brownian
motion. For any two nodes $u_1,u_2$ with last common ancestor at depth $m$, the processes $\F_{u_1},
\F_{u_2}$ agree for $t=0,\dots,m$, while any subsequent increments are independent. Hence
$\text{Cov}(\F_{u_1}, \F_{u_2}) = cm$ for some constant $c > 0$. Now, for any
$i,j\in\{0,\dots,2(n-1)\}$, the nodes $f(i), f(j)$ at depths $V_n(i), V_n(j)$ have last common ancestor
$f(k)$, where $k$ is such that $V_n(k)$ is minimal in the range $i\leq k\leq j$. Hence $r(s)$ should
be normally distributed with variance given by $v(s)$, and the covariance of $r(s), r(t)$
proportional to $\min_{s\leq u\leq t} v(u)$.

We now define $r, v$ precisely. If $\va\ \xi = \s^2$, then 
$v(s) \eqd 2\s^{-1} e(s)$, where $e(s)$ is a
standard Brownian excursion, as shown by Aldous \cite{a91, MR1207226}. Conditioning on $v$, we define $r$ as a
centered Gaussian process on $[0,1]$ with
$$
\text{Cov}(r(s), r(t) \mid v) = \frac{1}{12}\min_{s\leq u\leq t} v(u) 
=
\frac{1}{12\s} C(s, t),
\qquad s\le t.
$$
The constant $1/12$ appears as the variance of the random increments $\QQ_v$. Again, Theorem 2 of \cite{jm05} states that $(r_n, v_n) \dto (r, v)$ in $C[0,1]^2$. We conclude that
$$
\lim_{n\to\infty} \frac{J(T_n)}{n^{5/4}} =\int_0^1r_n(t)dt+o(1)
\dto \int_0^1 r(t)dt \eqd Y.
$$

This integral is the object of study in \cite{MR2108865}, wherein it is shown that
$$
Y \eqd \int_0^1 r(t)dt \eql \frac{1}{\sqrt{12\s}} \sqrt{\eta}\ \cN
,
$$
where $\cN$ is a standard normal variable, $\eta$ is given by
$$
\eta = \int_{[0, 1]^2} C(s, t) \mathrm ds\ \mathrm dt,
$$ 
and $\eta, \cN$ are independent. The odd moments of $Y$ are zero, as this is the case for $\cN$, and by \cite[Theorem 1.1]{MR2108865}, for $k\geq 0$
$$
\E{Y^{2k}} = \frac{1}{(12\s)^k} \frac{(2k!)\sqrt{\pi}}{2^{(9k-4)/2}\G((5k-1)/2)} a_k,
$$
where $a_1 = 1$ and for $k\geq 2$,
$$
a_k = 2(5k-4)(5k-6)a_{k-1} + \sum_{i=1}^{k-1}a_ia_{k-i}.
$$
In particular (\cite[Theorem 1.2]{MR2108865}),
$$
\E{Y^{2k}} \sim \frac{1}{(12\s)^k}\frac{2\pi^{3/2}\b}{5} (2k)^{1/2} (10e^3)^{-2k/4} (2k)^{\frac34 \cdot 2k},
$$
as $k\to\infty$, where $\b = 0.981038\dots$. Further analysis of the moments
of \(\eta\) and $Y$, including the moment
generating function and tail estimates, can be found in \cite{MR2108865}.

\begin{remark}
Conditioning on the value of $\eta$, the random variable $Y$ has variance
$\eta/(12\sigma)$. The
random variable $\eta$ can be seen as a scaled limit of the second common path length $\U_{2}(T_n)$,
which appeared in our earlier discussion on cumulants. Indeed, 
recall that 
\(\U_{2}(T_n) \eqd \sum_{u, v\in T_n} c(u, v)\),
where $c(u,v)$ denotes
the number of common ancestors of $u, v$. 
\end{remark}

\subsection{Convergence of the moment generating function}

The last bit of Theorem \ref{thm:galtonwatson} which remains to be proved is that \(\E{e^{t\YY_n}} \to
\E{e^{t\YY}}\) for all fixed \(t \in \R\). Since we have already shown
\(\YY_n \inlaw \YY\), we can apply the Vitali convergence theorem once we
have shown that the sequence \(e^{t \YY_n}\) is uniformly integrable. This
follows from the following lemma. 

\begin{lemma}
    \label{lem:mgflem:gw}
    For all \(n \in \N\) and \(t \in \R\), there exist positive constants \(C_1\) and \(c_1\) which do not
    depend on \(n\) such that
\[
        \E{e^{t\YY_n}} \le C_1 e^{c_1{t^4}}.
    \]
\end{lemma}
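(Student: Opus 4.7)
The plan is to condition on the tree $T_n$ and exploit the fixed-tree moment-generating-function bound \eqref{mgfIT*} from \refT{thm:cumulant}, then reduce the remaining randomness to a uniform sub-Gaussian tail estimate on the height of $T_n$.

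First, note that by \eqref{eq:halftpl} applied to the fixed tree $T_n$ we have $\tfrac12\U(T_n)=\E{I(T_n)\mid T_n}$, so conditionally on $T_n$ the quantity $n^{5/4}\YY_n = I(T_n)-\E{I(T_n)\mid T_n}$ is the centralized inversion count of a fixed tree. Applying \eqref{mgfIT*} to this tree with parameter $t/n^{5/4}$ gives
\begin{equation}
\E{e^{t\YY_n}\mid T_n}\le \exp\!\left(\frac{t^2\,\Upsilon_2(T_n)}{8\,n^{5/2}}\right),
\end{equation}
and taking expectation over $T_n$ yields $\E{e^{t\YY_n}}\le \E{\exp(t^2\Upsilon_2(T_n)/(8n^{5/2}))}$.

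Second, bound $\Upsilon_2(T_n)$ by the height $H_n\eqd \max_v h(v)$ of $T_n$. Since $z_v\le n$ for every $v$, we have $\Upsilon_2(T_n)=\sum_v z_v^2\le n\sum_v z_v = n(\U(T_n)+n)\le n^2(H_n+1)$, using $\U(T_n)\le n H_n$. Substituting,
\begin{equation}
\E{e^{t\YY_n}}\le e^{t^2/(8\sqrt n)}\,\E{\exp\!\left(\frac{t^2 H_n}{8\sqrt n}\right)}\le e^{t^2/8}\,\E{\exp\!\left(\frac{t^2 H_n}{8\sqrt n}\right)}.
\end{equation}

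Third, invoke a uniform sub-Gaussian concentration bound on the height of conditional Galton--Watson trees with an offspring distribution admitting some exponential moment: there exist constants $A,a>0$, independent of $n$, such that $\Prob{H_n\ge x\sqrt n}\le A e^{-a x^2}$ for all $x\ge 0$. An integration-by-parts argument on the tail gives a uniform sub-Gaussian MGF estimate $\E{\exp(sH_n/\sqrt n)}\le C_0 e^{c_0 s^2}$ for all $s\ge 0$ and all $n$, with constants $C_0,c_0>0$. Applying this with $s=t^2/8$ and using the elementary inequality $t^2/8\le \tfrac18 + \tfrac18 t^4$ to absorb the prefactor $e^{t^2/8}$ gives $\E{e^{t\YY_n}}\le C_1 e^{c_1 t^4}$ as required.

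The main obstacle is step three: the uniform-in-$n$ sub-Gaussian tail bound for $H_n$ on a conditional Galton--Watson tree, which crucially uses the exponential-moment hypothesis $\E{e^{\alpha\xi}}<\infty$. This is where the existing conditional Galton--Watson tree literature (e.g.\ work of Addario-Berry, Devroye and Janson on the width and height of such trees) does the heavy lifting; everything else is a direct application of \eqref{mgfIT*} together with the trivial bounds $z_v\le n$ and $\U(T_n)\le nH_n$.
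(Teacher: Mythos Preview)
Your proof is correct and follows essentially the same route as the paper: condition on $T_n$, apply \eqref{mgfIT*}, bound $\Upsilon_2(T_n)\le n^2(H_n+1)$, and then use the Addario-Berry--Devroye--Janson sub-Gaussian tail bound for $H_n$ together with a layer-cake integration. The only cosmetic difference is that you reach the inequality $\Upsilon_2(T_n)\le n^2(H_n+1)$ via $z_v\le n$ and $\U(T_n)\le nH_n$, whereas the paper uses the equivalent observation $c(u,v)\le H_n+1$ directly in \eqref{Uk}; your handling of the constants (retaining the $1/8$ and absorbing $e^{t^2/8}$ via $t^2\le 1+t^4$) is in fact slightly tidier than the paper's.
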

\begin{proof}
    Conditioned on \(T_n\), we have by \eqref{mgfIT*}
\begin{align*}
        \E{e^{tY_n}\mid T_n}
        \le
\exp\lrpar{\frac18\Bigpar{\frac{t}{n^{5/4}}}^2\U_2(T_n)}
=
\exp\lrpar{\frac{t^2}8\cdot\frac{\U_2(T_n)}{n^{5/2}}}.
    \end{align*}
    By \eqref{Uk}, we have
    \[
        \U_2(T_n) = \sum_{u, v \in T_n} c(u, v) \le n^2 (H_{n}+1),
    \]
    where \(H_n\) denotes the height of \(T_n\). It follows that
    \begin{align*}
        \E{e^{t\YY_n}}
        \le
        \E{\exp\left( \frac{\U_2(T_n)}{n^{5/2}} t^{2} \right)}
        \le
        \E{\exp\left( \frac{H_n +1}{\sqrt{n}} t^{2} \right)}
        \le
        e^{t^2}
        \E{\exp\left( \frac{ H_n}{\sqrt{n}} t^{2} \right)}
        .
    \end{align*}
    The random variable \(H_n\) has been well-studied. In particular, \citet{adj13} showed that
    there exist positive constants \(C_2\) and \(c_2\) such that
    \[
        \Prob{H_n > x} \le C_{2} \exp\left( -c_2 \frac{x^2}{n} \right),
    \]
    for all \(n \in \N\) and \(x \ge 0\).
    Therefore, we have
    \begin{align*}
        \E{\exp\left( \frac{ H_n}{\sqrt{n}} t^2 \right)}
        =
        1 + 
        \int_{0}^{\infty} e^{x} \Prob{\frac{ H_n}{\sqrt{n}} t^2 > x} \mathrm{d} x
        \le
        1 + 
        \int_{0}^{\infty} e^{x} C_{2} \exp\left( -c_2 \frac{x^2}{t^4} \right) \mathrm{d} x
        \le 1+ C_{1}  t^2 e^{c_3{t^4}}
    \end{align*}
    for some positive constants \(c_3\) and \(C_1\).
    (For the equality in the above computation, see \cite[pp.\ 56]{d10}.)
    Thus the lemma follows.
\end{proof}



\begin{thebibliography}{55}
\providecommand{\natexlab}[1]{#1}
\providecommand{\url}[1]{\texttt{#1}}
\expandafter\ifx\csname urlstyle\endcsname\relax
  \providecommand{\doi}[1]{doi: #1}\else
  \providecommand{\doi}{doi: \begingroup \urlstyle{rm}\Url}\fi

\bibitem[Addario-Berry et~al.(2013)Addario-Berry, Devroye, and Janson]{adj13}
L.~Addario-Berry, L.~Devroye, and S.~Janson.
\newblock Sub-{G}aussian tail bounds for the width and height of conditioned
  {G}alton-{W}atson trees.
\newblock \emph{Ann. Probab.}, 41\penalty0 (2):\penalty0 1072--1087, 2013.

\bibitem[Albert et~al.(2018)Albert, Holmgren, Johansson, and
  Skerman]{finoa2018}
M.~Albert, C.~Holmgren, T.~Johansson, and F.~Skerman.
\newblock {Permutations in Binary Trees and Split Trees}.
\newblock In \emph{29th International Conference on Probabilistic,
  Combinatorial and Asymptotic Methods for the Analysis of Algorithms (AofA
  2018)}, volume 110 of \emph{Leibniz International Proceedings in Informatics
  (LIPIcs)}, pages 9:1--9:12, Dagstuhl, Germany, 2018. Schloss
  Dagstuhl--Leibniz-Zentrum fuer Informatik.

\bibitem[Aldous(1991{\natexlab{a}})]{MR1085326}
D.~Aldous.
\newblock The continuum random tree. {I}.
\newblock \emph{Ann. Probab.}, 19\penalty0 (1):\penalty0 1--28,
  1991{\natexlab{a}}.

\bibitem[Aldous(1991{\natexlab{b}})]{a91}
D.~Aldous.
\newblock The continuum random tree. {II}. {A}n overview.
\newblock In \emph{Stochastic analysis ({D}urham, 1990)}, volume 167 of
  \emph{London Math. Soc. Lecture Note Ser.}, pages 23--70. Cambridge Univ.
  Press, Cambridge, 1991{\natexlab{b}}.

\bibitem[Aldous(1993)]{MR1207226}
D.~Aldous.
\newblock The continuum random tree. {III}.
\newblock \emph{Ann. Probab.}, 21\penalty0 (1):\penalty0 248--289, 1993.

\bibitem[Baeza-Yates(1987)]{MR905782}
R.~A. Baeza-Yates.
\newblock Some average measures in {$m$}-ary search trees.
\newblock \emph{Inform. Process. Lett.}, 25\penalty0 (6):\penalty0 375--381,
  1987.

\bibitem[Bender(1973)]{b73}
E.~A. Bender.
\newblock Central and local limit theorems applied to asymptotic enumeration.
\newblock \emph{J. Combinatorial Theory Ser. A}, 15:\penalty0 91--111, 1973.

\bibitem[Bienaym\'{e}(1845)]{bienayme}
I.~J. Bienaym\'{e}.
\newblock De la loi de multiplication et de la dur\'{e}e des familles.
\newblock \emph{Soci\'{e}t\'{e} Philomatique Paris}, 1845.
\newblock \\Reprinted in D. G. Kendall. The genealogy of genealogy branching
  processes before (and after) 1873. \emph{Bull. London Math. Soc.}, 7,
  225--253, 1975.


\bibitem[Broutin and Holmgren(2012)]{MR3025680}
N.~Broutin and C.~Holmgren.
\newblock The total path length of split trees.
\newblock \emph{Ann. Appl. Probab.}, 22\penalty0 (5):\penalty0 1745--1777,
  2012.

\bibitem[Cai and Devroye(2017)]{cai16}
X.~S. Cai and L.~Devroye.
\newblock A study of large fringe and non-fringe subtrees in conditional
  {Galton-Watson} trees.
\newblock \emph{Latin American Journal of Probability and Mathematical
  Statistics}, XIV:\penalty0 579--611, 2017.

\bibitem[Chauvin and Pouyanne(2004)]{MR2035872}
B.~Chauvin and N.~Pouyanne.
\newblock {$m$}-ary search trees when {$m\ge27$}: a strong asymptotics for the
  space requirements.
\newblock \emph{Random Structures Algorithms}, 24\penalty0 (2):\penalty0
  133--154, 2004.

\bibitem[Coffman and Eve(1970)]{Coffman:1970}
E.~G. Coffman, Jr. and J.~Eve.
\newblock File structures using hashing functions.
\newblock \emph{Commun. ACM}, 13\penalty0 (7):\penalty0 427--432, July 1970.

\bibitem[Conrad()]{conrad2017}
K.~Conrad.
\newblock Probability distributions and maximum entropy.
\newblock
  \url{http://www.math.uconn.edu/~kconrad/blurbs/analysis/entropypost.pdf}.
\newblock Accessed: 2017-05-26.

\bibitem[Cramer(1750)]{cramer1750}
G.~Cramer.
\newblock \emph{Introduction {\`a} l'analyse des lignes courbes
  alg{\'e}briques}.
\newblock 1750.

\bibitem[Devroye(1993)]{MR1236537}
L.~Devroye.
\newblock On the expected height of fringe-balanced trees.
\newblock \emph{Acta Inform.}, 30\penalty0 (5):\penalty0 459--466, 1993.

\bibitem[Devroye(1999)]{MR1634354}
L.~Devroye.
\newblock Universal limit laws for depths in random trees.
\newblock \emph{SIAM J. Comput.}, 28\penalty0 (2):\penalty0 409--432, 1999.

\bibitem[{Devroye} et~al.(2017){Devroye}, {Holmgren}, and {Sulzbach}]{dhs17}
L.~{Devroye}, C.~{Holmgren}, and H.~{Sulzbach}.
\newblock {The heavy path approach to Galton-Watson trees with an application
  to Apollonian networks}.
\newblock Preprint, Jan. 2017.
\href{https://arxiv.org/abs/1701.02527}{\nolinkurl{arXiv:1701.02527}}

\bibitem[{\relax DLMF}()]{NIST:DLMF}
{\relax DLMF}.
\newblock {\it NIST Digital Library of Mathematical Functions}.
\newblock http://dlmf.nist.gov/, Release 1.0.15 of 2017-06-01.
\newblock F.~W.~J. Olver, A.~B. {Olde Daalhuis}, D.~W. Lozier, B.~I. Schneider,
  R.~F. Boisvert, C.~W. Clark, B.~R. Miller and B.~V. Saunders, eds.

\bibitem[Drmota et~al.(2009)Drmota, Iksanov, Moehle, and Roesler]{MR2504401}
M.~Drmota, A.~Iksanov, M.~Moehle, and U.~Roesler.
\newblock A limiting distribution for the number of cuts needed to isolate the
  root of a random recursive tree.
\newblock \emph{Random Structures Algorithms}, 34\penalty0 (3):\penalty0
  319--336, 2009.

\bibitem[Durrett(2010)]{d10}
R.~Durrett.
\newblock \emph{Probability: theory and examples}, volume~31 of \emph{Cambridge
  Series in Statistical and Probabilistic Mathematics}.
\newblock Cambridge University Press, Cambridge, 4th edition, 2010.
\newblock ISBN 978-0-521-76539-8.

\bibitem[Feller(1968)]{f68}
W.~Feller.
\newblock \emph{An introduction to probability theory and its applications.
  {V}ol. {I}}.
\newblock John Wiley \& Sons, Inc., New York-London-Sydney, 3rd edition, 1968.

\bibitem[Finkel and Bentley(1974)]{Finkel1974}
R.~A. Finkel and J.~L. Bentley.
\newblock Quad trees a data structure for retrieval on composite keys.
\newblock \emph{Acta Informatica}, 4\penalty0 (1):\penalty0 1--9, 1974.

\bibitem[Flajolet et~al.(1998)Flajolet, Poblete, and Viola]{fpv98}
P.~Flajolet, P.~Poblete, and A.~Viola.
\newblock On the analysis of linear probing hashing.
\newblock \emph{Algorithmica}, 22\penalty0 (4):\penalty0 490--515, 1998.

\bibitem[Flajolet et~al.(2010)Flajolet, Roux, and Vall\'ee]{MR2735344}
P.~Flajolet, M.~Roux, and B.~Vall\'ee.
\newblock Digital trees and memoryless sources: from arithmetics to analysis.
\newblock In \emph{21st {I}nternational {M}eeting on {P}robabilistic,
  {C}ombinatorial, and {A}symptotic {M}ethods in the {A}nalysis of {A}lgorithms
  ({A}of{A}'10)}, Discrete Math. Theor. Comput. Sci. Proc., AM, pages 233--260.
  Assoc. Discrete Math. Theor. Comput. Sci., Nancy, 2010.

\bibitem[Gessel et~al.(1995)Gessel, Sagan, and Yeh]{gsy95}
I.~M. Gessel, B.~E. Sagan, and Y.~N. Yeh.
\newblock Enumeration of trees by inversions.
\newblock \emph{J. Graph Theory}, 19\penalty0 (4):\penalty0 435--459, 1995.

\bibitem[Gittenberger(2003)]{MR2033198}
B.~Gittenberger.
\newblock A note on: ``{S}tate spaces of the snake and its tour---convergence
  of the discrete snake''.
\newblock \emph{J. Theoret. Probab.}, 16\penalty0 (4):\penalty0 1063--1067
  (2004), 2003.

\bibitem[Gut(2013)]{g13}
A.~Gut.
\newblock \emph{Probability: a graduate course}.
\newblock Springer Texts in Statistics. Springer, New York, 2nd edition, 2013.

\bibitem[Hoare(1962)]{MR0142216}
C.~A.~R. Hoare.
\newblock Quicksort.
\newblock \emph{Comput. J.}, 5:\penalty0 10--15, 1962.

\bibitem[Hoeffding(1963)]{Hoeffding}
W.~Hoeffding.
\newblock Probability inequalities for sums of bounded random variables.
\newblock \emph{J. Amer. Statist. Assoc.}, 58:\penalty0 13--30, 1963.

\bibitem[Holmgren(2012)]{MR2878784}
C.~Holmgren.
\newblock Novel characteristic of split trees by use of renewal theory.
\newblock \emph{Electron. J. Probab.}, 17:\penalty0 no. 5, 27, 2012.

\bibitem[Janson(2003)]{j02}
S.~Janson.
\newblock The {W}iener index of simply generated random trees.
\newblock \emph{Random Structures Algorithms}, 22\penalty0 (4):\penalty0
  337--358, 2003.

\bibitem[Janson(2012)]{j12}
S.~Janson.
\newblock Simply generated trees, conditioned {G}alton-{W}atson trees, random
  allocations and condensation: extended abstract.
\newblock In \emph{23rd {I}ntern. {M}eeting on {P}robabilistic,
  {C}ombinatorial, and {A}symptotic {M}ethods for the {A}nalysis of
  {A}lgorithms ({A}of{A}'12)}, Discrete Math. Theor. Comput. Sci. Proc., AQ,
  pages 479--490. Assoc. Discrete Math. Theor. Comput. Sci., Nancy, 2012.

\bibitem[Janson(2016)]{j16}
S.~Janson.
\newblock Asymptotic normality of fringe subtrees and additive functionals in
  conditioned {G}alton-{W}atson trees.
\newblock \emph{Random Structures Algorithms}, 48\penalty0 (1):\penalty0
  57--101, 2016.

\bibitem[{Janson}(2017)]{s17}
S.~{Janson}.
\newblock {Random recursive trees and preferential attachment trees are random
  split trees}.
\newblock Preprint, June 2017.
\href{https://arxiv.org/abs/1706.05487}{\nolinkurl{arXiv:1706.05487}}

\bibitem[Janson and Chassaing(2004)]{MR2108865}
S.~Janson and P.~Chassaing.
\newblock The center of mass of the {ISE} and the {W}iener index of trees.
\newblock \emph{Electron. Comm. Probab.}, 9:\penalty0 178--187, 2004.

\bibitem[Janson and Marckert(2005)]{jm05}
S.~Janson and J.-F. Marckert.
\newblock Convergence of discrete snakes.
\newblock \emph{J. Theoret. Probab.}, 18\penalty0 (3):\penalty0 615--647, 2005.

\bibitem[Knuth(1998)]{k98}
D.~E. Knuth.
\newblock \emph{The art of computer programming. {V}ol. 3}.
\newblock Addison-Wesley, Reading, MA, 1998.
\newblock ISBN 0-201-89685-0.

\bibitem[Kortchemski(2017)]{k17}
I.~Kortchemski.
\newblock Sub-exponential tail bounds for conditioned stable
  {B}ienaym\'e-{G}alton-{W}atson trees.
\newblock \emph{Probab. Theory Related Fields}, 168\penalty0 (1-2):\penalty0
  1--40, 2017.

\bibitem[Louchard and Prodinger(2003)]{lp03}
G.~Louchard and H.~Prodinger.
\newblock The number of inversions in permutations: a saddle point approach.
\newblock \emph{J. Integer Seq.}, 6\penalty0 (2):\penalty0 Article 03.2.8, 19,
  2003.

\bibitem[Mahmoud and Pittel(1989)]{MR987097}
H.~M. Mahmoud and B.~Pittel.
\newblock Analysis of the space of search trees under the random insertion
  algorithm.
\newblock \emph{J. Algorithms}, 10\penalty0 (1):\penalty0 52--75, 1989.

\bibitem[Mallows and Riordan(1968)]{mr68}
C.~L. Mallows and J.~Riordan.
\newblock The inversion enumerator for labeled trees.
\newblock \emph{Bull. Amer. Math. Soc.}, 74:\penalty0 92--94, 1968.

\bibitem[Margolius(2001)]{m01}
B.~H. Margolius.
\newblock Permutations with inversions.
\newblock \emph{J. Integer Seq.}, 4\penalty0 (2):\penalty0 Article 01.2.4, 13,
  2001.

\bibitem[Neininger(2001)]{MR1871564}
R.~Neininger.
\newblock On a multivariate contraction method for random recursive structures
  with applications to {Q}uicksort.
\newblock \emph{Random Structures Algorithms}, 19\penalty0 (3-4):\penalty0
  498--524, 2001.

\bibitem[Neininger and R\"uschendorf(1999)]{nr99}
R.~Neininger and L.~R\"uschendorf.
\newblock On the internal path length of {$d$}-dimensional quad trees.
\newblock \emph{Random Structures Algorithms}, 15\penalty0 (1):\penalty0
  25--41, 1999.

\bibitem[Neininger and R\"uschendorf(2004)]{MR2023025}
R.~Neininger and L.~R\"uschendorf.
\newblock A general limit theorem for recursive algorithms and combinatorial
  structures.
\newblock \emph{Ann. Appl. Probab.}, 14\penalty0 (1):\penalty0 378--418, 2004.

\bibitem[Panholzer and Seitz(2012)]{ps12}
A.~Panholzer and G.~Seitz.
\newblock Limiting distributions for the number of inversions in labelled tree
  families.
\newblock \emph{Ann. Comb.}, 16\penalty0 (4):\penalty0 847--870, 2012.

\bibitem[Pyke(1965)]{MR0216622}
R.~Pyke.
\newblock Spacings. ({W}ith discussion.).
\newblock \emph{J. Roy. Statist. Soc. Ser. B}, 27:\penalty0 395--449, 1965.

\bibitem[R\'egnier and Jacquet(1989)]{MR995343}
M.~R\'egnier and P.~Jacquet.
\newblock New results on the size of tries.
\newblock \emph{IEEE Trans. Inform. Theory}, 35\penalty0 (1):\penalty0
  203--205, 1989.

\bibitem[R\"osler(1991)]{MR1104413}
U.~R\"osler.
\newblock A limit theorem for ``{Q}uicksort''.
\newblock \emph{RAIRO Inform. Th\'eor. Appl.}, 25\penalty0 (1):\penalty0
  85--100, 1991.

\bibitem[R\"osler(2001)]{MR1887306}
U.~R\"osler.
\newblock On the analysis of stochastic divide and conquer algorithms.
\newblock \emph{Algorithmica}, 29\penalty0 (1-2):\penalty0 238--261, 2001.

\bibitem[R\"osler and R\"uschendorf(2001)]{MR1887296}
U.~R\"osler and L.~R\"uschendorf.
\newblock The contraction method for recursive algorithms.
\newblock \emph{Algorithmica}, 29\penalty0 (1-2):\penalty0 3--33, 2001.

\bibitem[Sachkov(1997)]{s97}
V.~N. Sachkov.
\newblock \emph{Probabilistic methods in combinatorial analysis}, volume~56 of
  \emph{Encyclopedia of Mathematics and its Applications}.
\newblock Cambridge University Press, Cambridge, 1997.
\newblock ISBN 0-521-45512-X.
\newblock Translated from the Russian, Revised by the author.

\bibitem[Smith(1995)]{s95}
P.~J. Smith.
\newblock A recursive formulation of the old problem of obtaining moments from
  cumulants and vice versa.
\newblock \emph{Amer. Statist.}, 49\penalty0 (2):\penalty0 217--218, 1995.

\bibitem[Walker and Wood(1976)]{w76}
A.~Walker and D.~Wood.
\newblock Locally balanced binary trees.
\newblock \emph{The Computer Journal}, 19\penalty0 (4):\penalty0 322--325,
  1976.

\bibitem[Watson and Galton(1875)]{watson1875}
H.~W. Watson and F.~Galton.
\newblock On the probability of the extinction of families.
\newblock \emph{The Journal of the Anthropological Institute of Great Britain
  and Ireland}, 4:\penalty0 138--144, 1875.

\end{thebibliography}

\end{document}